\theoremstyle{plain}
\newtheorem{theorem}{Theorem}
\newtheorem{lemma}[theorem]{Lemma}
\newtheorem{corollary}[theorem]{Corollary}
\newtheorem{conjecture}[theorem]{Conjecture}
\theoremstyle{definition}
\newtheorem{definition}[theorem]{Definition}
\theoremstyle{remark}
\newtheorem*{remark}{Remark}
\newtheorem{convention}[theorem]{Convention}
\newcommand{\numberset}{\mathbb}
\newcommand{\NN}{\numberset{N}}
\newcommand{\RR}{\numberset{R}}
\newcommand{\cL}{\mathcal{L}}
\renewcommand{\epsilon}{\varepsilon}
\newcommand{\alphabet}{\mathcal{A}}
\newcommand{\rauzy}{$\mathcal R$}
\newcommand{\zorich}{$\mathcal Z$}
\begin{document}

\title[Double rotations of infinite type]{A note on double rotations of infinite type}
\author[M. Artigiani]{Mauro Artigiani}
\address{School of Engineering, Science and Technology\\
 Universidad del Rosario\\
  Carrera 6 No.\ 12 C-16\\
	Bogot\'a 111711\\
	Colombia}
\email{mauro.artigiani@urosario.edu.co}

\author[C. Fougeron]{Charles Fougeron}
\address{IRIF, Universit\'e de Paris \\
	8 Place Nemours, 75013 \\
	Paris\\
	France}
\email{charles.fougeron@math.cnrs.fr}

\author[P. Hubert]{Pascal Hubert}
\address{Institut de Math\'ematiques de Marseille\\
	39 rue F. Joliot-Curie\\
	13453 Marseille Cedex 20\\
	France}
\email{pascal.hubert@univ-amu.fr}

\author[A. Skripchenko]{Alexandra Skripchenko}
\address{Faculty of Mathematics\\
 National Research University Higher School of Economics\\
  Usacheva St.\ 6\\
	119048 Moscow\\
	Russia \textit{and}
Skolkovo Institute for Science and Technology\\
 Skolkovo Innovation Center\\
 143026 Moscow\\
 Russia}
\email{sashaskrip@gmail.com}

\begin{abstract}
	We introduce a new renormalization procedure on double rotations, which is reminiscent of the classical Rauzy induction.
	Using this renormalization we prove that the set of parameters which induce infinite type double rotations has Hausdorff dimension strictly smaller than $3$.
	Moreover, we construct a natural invariant measure supported on these parameters and show that, with respect to this measure, almost all double rotations are uniquely ergodic.

	MSC Classification: 37E05. Bibliography: 17 items.

	Keywords: interval translation mappings, unique ergodicity, Rauzy induction.

\end{abstract}

\dedicatory{On the occasion of 80th anniversaries of V. I. Oseledets and A. M. Stepin}
\maketitle

\section{Introduction}
A \emph{double rotation} is a map $T\colon [0,1) \to [0,1)$ of the form:
\begin{equation}\label{eq:defdoublerotations}
	Ty =
	\begin{cases}
		y+\alpha  \pmod 1 & \text{if } y \in [0,c)  \\
		y+\beta \pmod 1   & \text{if } y \in [c,1).
	\end{cases}
\end{equation}

Double rotations, defined in~\cite{SuzukiItoAihara05}, are a class of \emph{interval translation mappings} (ITMs), see \cref{sec:history} for more details.
A double rotation is of finite type if its attractor is an interval and of infinite type if the closure of the attractor is a Cantor set.
It is easy to see that the restriction of a double rotation of finite type to its attractor is simply a rotation.
We prove

\begin{theorem}\label{main}
	The set of parameters that give rise to a double rotation of infinite type has Hausdorff dimension strictly less than $3$.
\end{theorem}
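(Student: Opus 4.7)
The plan is to exploit a Rauzy-type renormalization $\mathcal{R}$ on double rotations, acting piecewise linearly on the parameter cone $\cone$. The first step is to set up the induction: given a double rotation $T$, consider the first-return map of $T$ to an appropriate subinterval chosen so that the return is again a double rotation. This produces a new triple $(\alpha', \beta', c')$ obtained from $(\alpha, \beta, c)$ by an integral invertible linear transformation depending on a combinatorial datum, so $\mathcal{R}$ lifts to a piecewise linear map on $\cone$ whose combinatorial types form a finite graph.

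The central dynamical characterisation is the following: a double rotation is of finite type precisely when, after finitely many steps of $\mathcal{R}$, its combinatorial datum degenerates to that of a pure rotation (a single translation length and no true gap). Consequently, the set $\Lambda_\infty$ of infinite-type parameters is exactly the set of points in $\cone$ whose entire forward $\mathcal{R}$-orbit avoids this degenerate stratum. The infinite-type locus is therefore naturally described as the attractor of an iterated function system built from the inverse branches of $\mathcal{R}$ that do not pass through the degenerate configurations.

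To obtain the strict inequality $\dim_H \Lambda_\infty < 3$, I would identify an explicit loop in the Rauzy graph whose inverse branch maps the base simplex into a region strictly inside the degenerate stratum. By the piecewise linear structure of $\mathcal{R}$ and the compactness of the projectivised base, this yields a uniform constant $\eta>0$ such that on each return to a base combinatorial type, at least an $\eta$-proportion of the volume is lost to the finite-type regime. Iterating this loss gives a cover of $\Lambda_\infty$ at scale $\varepsilon$ by significantly fewer than $\varepsilon^{-3}$ cubes, and a Moran/Bowen-type estimate then yields a dimension bound of the form $3 - \delta$ for some explicit $\delta > 0$ depending on $\eta$ and the contraction rates of the inverse branches.

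The main obstacle will be the combinatorial analysis needed to exhibit such a loop. Unlike classical Rauzy induction for IETs, the renormalization of double rotations must keep track of both the partition alphabet $\alphabet$ and the extra gap data $\gap$, and the finite-type degeneration can occur in several combinatorial ways. Producing one sequence of induction steps that, uniformly on an open subcone, forces the system into the degenerate stratum — and verifying that the induced projective cocycle is strictly contracting along this loop — is the technical heart of the argument; once this is done, the Hausdorff dimension bound follows by standard techniques.
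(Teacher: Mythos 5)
Your setup — a piecewise-linear Rauzy-type renormalization on the parameter cone with a finite combinatorial graph, and the characterisation of the infinite-type locus as the set of parameters whose orbit never reaches the rotation stratum — matches the paper's framework (the paper realises this via a splitting of the $3$-ITM into an ITM permutation on $5$ intervals, which eliminates the translation parameter and makes the induction genuinely linear in the lengths alone; this is a nontrivial point you gloss over, since the naive Rauzy induction on ITMs can increase the number of continuity intervals). The genuine gap is in your final step. Exhibiting a loop along which a definite proportion $\eta$ of the volume escapes to the finite-type regime at each return proves that $\Lambda_\infty$ has Lebesgue measure zero — this is essentially the Bruin--Clack argument — but it does \emph{not} by itself yield $\dim_H \Lambda_\infty < 3$. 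To convert "an $\eta$-proportion of volume is lost per step" into a count of covering cubes at scale $\varepsilon$, you need the cylinders of the renormalization to be uniformly comparable to balls (or at least to have controlled eccentricity), i.e.\ a bounded-distortion property for the projective cocycle over \emph{all} long itineraries, not just contraction along one chosen loop. For Markovian multidimensional continued-fraction algorithms of this kind, bounded distortion fails: itineraries in which one letter wins many consecutive times produce extremely elongated cylinders, and the Moran/Bowen estimate breaks down precisely there.

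This is where the paper does its real work. It verifies that the simplicial system associated to the induction is \emph{strongly non-degenerating} (via the lemma that every letter wins and loses infinitely often, plus a combinatorial analysis of the degenerate subgraphs $G_{\mathcal{L}}$), hence \emph{quickly escaping} in the sense of Fougeron; this condition is exactly the substitute for bounded distortion, guaranteeing that orbits leave the degenerate (distortion-producing) subgraphs fast with high probability. The dimension bound then follows from Fougeron's Theorem~1.5, a general result for quickly escaping simplicial systems. To repair your argument without invoking that machinery, you would need to supply your own uniform control on the shapes of all cylinders — for instance an acceleration of the induction with uniformly expanding, bounded-distortion branches — and that is the technical heart of the problem, not the exhibition of a single escaping loop.
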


This theorem is a refinement of the statement obtained by H. Bruin and G. Clack in~\cite{BruinClack12} (see \cref{infinite} below) and agrees with their  numerical bounds.

The way in which we prove this result is significantly different from all the ideas described in~\cite{BruinClack12}.
Namely, we introduce a new renormalization procedure that is morally very close to the classical Rauzy induction for interval exchange transformations.
This renormalization allows us to define the interesting parameter set $X$ (the one that gives rise to the infinite type double rotations) as an attractor of some fractal set; by its nature our renormalization can be seen as a Markovian multidimensional fraction algorithm. Using the formalism introduced by  Fougeron in~\cite{Fougeron20}, we check that the \emph{simplicial system} $G$ associated with this algorithm is \emph{quickly escaping}.
This is an important technical condition that in some sense plays the role of bounded distortion in the classical Markov-Gibbs measure theory.
Thus we can apply results of~\cite{Fougeron20} on fractals sets to our setting.
In particular, the application of Theorem 1.5 of \cite{Fougeron20} results in our \cref{main}.
Moreover Theorem 4.24 in~\cite{Fougeron20} allows us to define a probability measure $\mu$, invariant with respect to our renormalization procedure, that induces the unique measure of maximal entropy on the canonical suspension of our parameter space $X$ (see Section~2.4 in \cite{Fougeron20} for definition of the suspension).
This produces a canonical invariant measure on the set of infinite type double rotations and enable us to describe the generic dynamical behaviors of such maps.

\begin{theorem}\label{unique}
Almost all double rotations of infinite type, with respect to the measure $\mu$, are uniquely ergodic.
\end{theorem}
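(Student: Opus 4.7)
The plan is to adapt the classical Veech--Masur criterion for unique ergodicity to the simplicial-systems framework set up for \cref{main}. For a double rotation $T$ of infinite type, the renormalization constructed earlier produces an infinite sequence of induction matrices; write $A_n$ for the product of the first $n$ of them read along the orbit of the parameter of $T$. As in the case of interval exchange transformations, a $T$-invariant probability measure is encoded by a nonnegative vector in $\cone$ whose coordinates give the widths of the sub-intervals of the $n$-th induction, and compatibility between successive levels forces any such vector to lie in $A_n(\cone)$ for every $n$. The first step is therefore to make this correspondence precise, showing that the set of invariant probability measures of $T$ is canonically identified with the projectivization of the nested intersection $\bigcap_{n\geq 0} A_n(\cone)$.

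Once this dictionary is in place, unique ergodicity reduces to the statement that this intersection is $\mu$-almost surely a single ray. Here I would invoke the ergodicity and invariance of $\mu$ on the suspension (coming from Theorem 4.24 of \cite{Fougeron20}) together with the quickly escaping property of $G$ already verified for \cref{main}. Concretely, the quickly escaping hypothesis supplies uniform control on the recurrence to sub-graphs whose associated matrix products are strictly positive, in the same way Kerckhoff's lemma does for the standard Rauzy--Veech cocycle. A Furstenberg-type argument applied to the renormalization cocycle along a $\mu$-typical trajectory then shows that the diameters (in the Hilbert projective metric on $\cone$) of the cones $A_n(\cone)$ tend to zero almost surely. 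Combined with step one, this yields \cref{unique}.

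The main obstacle is the projective contraction statement. In the simplicial framework, quickly escaping is tailored to Hausdorff dimension estimates and does not by itself imply contraction of the cocycle onto a line; one needs, in addition, a form of irreducibility guaranteeing that in a $\mu$-positive proportion of orbits one meets loops in $G$ whose associated matrix product is strictly positive, together with integrable distortion estimates across such loops. Both properties should ultimately reduce to a concrete combinatorial inspection of $G$ and to an application of the distortion bounds proved in \cite{Fougeron20} under the quickly escaping assumption, but this verification is where the argument is most technical. If a packaged statement of the form \emph{``quickly escaping plus positivity of $G$ implies almost sure unique ergodicity''} is available from \cite{Fougeron20}, the conclusion becomes a direct citation; otherwise, the contraction must be established by hand along the lines indicated above.
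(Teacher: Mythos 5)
Your strategy is the same one the paper follows, and the two points you flag as needing work are exactly the two points the paper closes by citation rather than by hand. For the dictionary between invariant measures of $T$ and the projectivized nested intersection $\bigcap_n A_n(\cone)$, the paper does not rebuild the Veech--Masur correspondence: it invokes the criterion of Lemma 17 of \cite{BruinTroubetzkoy03}, exactly as in Theorem~5 of \cite{BruinClack12}, which converts ``the parameters following a given itinerary form a single point'' into unique ergodicity. For the contraction step, the ``packaged statement'' you hope for does exist: as explained in Section~4.1 of \cite{Fougeron20}, the quickly escaping property (verified in \cref{quicklyescaping}) yields a \emph{uniformly expanding acceleration} of the induction, and uniform expansion of the accelerated map on the simplex is equivalent to the projective contraction of the cocycle along $\mu$-generic paths that you propose to extract via a Kerckhoff/Furstenberg argument. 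So you do not need to locate positive loops in $G$ or establish integrable distortion by hand; the point of proving the quickly escaping property earlier in the paper is precisely that it already encodes the Kerckhoff-type recurrence control. The only other ingredient is that $\mu$ itself comes from Theorem 4.24 of \cite{Fougeron20} (the measure inducing the measure of maximal entropy on the suspension), so that ``$\mu$-generic path'' makes sense. With those two substitutions your outline becomes the paper's proof; as written, the contraction step you leave open is a genuine gap only in the sense that you have not yet recognized it as a consequence of results you have already established.
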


\begin{remark}
	Bruin and Clack proved a similar statement for all measures invariant with respect to the SIA induction and supported on $X$, but the existence of any such a measure was up to now still unknown.
	The idea that thermodynamical formalism could be a tool to construct such a measure with support on $X$ was mentioned in~\cite{BruinClack12}; however, no explicit construction was provided.
\end{remark}

\subsection*{Acknowledgments}
A.~Skripchenko appreciates the support of RSF-ANR Grant, Project 20-41-09009.
We thank Serge Troubetzkoy for his numerous remarks and comments which greatly improved the paper.

\section{Historical remarks}\label{sec:history}
\subsection{Interval translation mappings}
We begin by giving the definition of \emph{interval translation mappings} or ITMs for short.

\begin{definition}
	An \emph{interval translation map} is a piecewise translation map $T$ defined on an half-open interval $I \subset \RR$ with values in $I$.
	We call $T$ a $n$-interval translation map (or $n$-ITM) if $I$ has $n$ maximal half-open sub-intervals to which the restriction of the $T$ is a translation.
	The endpoints of theses intervals are called \emph{singularities} of the map, and the endpoints of the images of the intervals are the images of the singularities.
\end{definition}

Two examples of ITMs on $3$ intervals are shown in \cref{3itm}.
ITMs were introduced in~\cite{BoshernitzanKornfeld95} as a natural generalization of \emph{interval exchange transformations} (IETs).
Unlike IETs these transformations are not necessarily surjective: the images of the intervals do not need to form a partition, they simply form a collection of subintervals of the original interval.

\begin{figure}[bt]
	\def \la {3cm} \def \lb {1cm} \def \lc {2cm} \def \tb {3.7cm} \def \epsi {.1cm} \def \epsip {.2cm}
	\begin{subfigure}{.5\textwidth}
		\begin{tikzpicture}[scale=.8]
			\coordinate (basetop) at (0,0);
			\coordinate (basebot) at (0,-1cm);

			\draw[color=green] (basetop) --++ (\la,0) coordinate (a+) node[pos=.5, above] {$1$};

			\draw (a+) ++ (\epsi,0) coordinate (b-);
			\draw[color=red] (b-) --++ (\lb,0)coordinate(b+) node[pos=.5, above] {$2$};

			\draw (b+) ++ (\epsi,0) coordinate (c-);
			\draw[color=blue] (c-) --++ (\lc,0)coordinate(c+) node[pos=.5, above] {$3$};

			\draw[color=blue] (basebot) --++ (\lc,0) coordinate (c1+) node[pos=.5, below] {$3$};

			\draw (basebot) ++ (\tb,\epsip) coordinate (b1-);
			\draw[color=red] (b1-) --++ (\lb,0)coordinate(b1+) node[pos=.5, above] {$2$};

			\draw (c1+) ++ (\lb + 2*\epsi,0) coordinate (a1-);
			\draw[color=green] (a1-) --++ (\la,0)coordinate(a1+) node[pos=.5, below] {$1$};
		\end{tikzpicture}
	\end{subfigure}
	\begin{subfigure}{.4\textwidth}
		\def \tb {1.7cm}
		\begin{tikzpicture}[scale=.8]
			\coordinate (basetop) at (0,0);
			\coordinate (basebot) at (0,-1cm);

			\draw[color=green]  (basetop) --++ (\la,0) coordinate (a+) node[pos=.5, above] {$1$};

			\draw (a+) ++ (\epsi,0) coordinate (b-);
			\draw[color=red]  (b-) --++ (\lb,0)coordinate(b+) node[pos=.5, above] {$2$};

			\draw (b+) ++ (\epsi,0) coordinate (c-);
			\draw[color=blue]  (c-) --++ (\lc,0)coordinate(c+) node[pos=.5, above] {$3$};

			\draw[color=blue] (basebot) --++ (\lc,0) coordinate (c1+) node[pos=.5, below] {$3$};

			\draw (basebot) ++ (\tb,\epsip) coordinate (b1-);
			\draw[color=red]  (b1-) --++ (\lb,0)coordinate(b1+) node[pos=.5, above] {$2$};

			\draw (c1+) ++ (\lb + 2*\epsi,0) coordinate (a1-);
			\draw[color=green] (a1-) --++ (\la,0)coordinate(a1+) node[pos=.5, below] {$1$};
		\end{tikzpicture}
	\end{subfigure}
	\caption{Examples of 3-ITMs.}
	\label{3itm}
\end{figure}
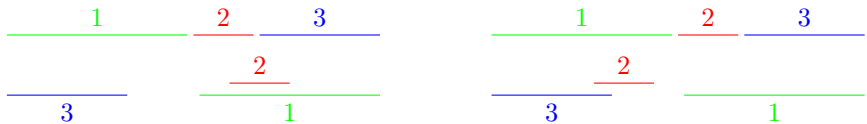

It was noticed already in~\cite{BoshernitzanKornfeld95} that each ITM is either of \emph{finite} or \emph{infinite} type.
This classification is based on the properties of the \emph{attractor} of the ITM. Namely, for a given mapping $T$ we consider the sequence $\Omega_n = I\cap TI\cap T^2I\cdots\cap T^{n}I$.
If this sequence stabilizes for some $N\in\NN$, i.e., $\Omega_k = \Omega_{k+1}$ for all $k\ge N$, the ITM $T$ is of \emph{finite type}.
If there is no such $N$ then closure of the limit set $\Omega =I\cap TI\cap T^2I\cdots$ is a Cantor set, see~\cite{SchmelingTroubetzkoy00}, and the ITM is said to be of \emph{infinite type}.

Dynamics of ITMs of finite type basically coincides with the one of IETs. However, ITMs of infinite type are remarkably different.
Boshernitzan and Kornfeld described the first example of ITM of infinite type.
In the same paper, they formulated the following

\begin{conjecture}\label{conjecture}
	The set of parameters that give rise to ITMs of infinite type has zero Lebesgue measure.
\end{conjecture}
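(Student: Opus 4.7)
The conjecture is stated for arbitrary ITMs, so it is worth separating the double rotation case from the general one. For double rotations the parameter space is the three-dimensional cube $[0,1)^3$ parametrized by $(\alpha,\beta,c)$, and Hausdorff dimension strictly smaller than the ambient dimension immediately forces vanishing Lebesgue measure. Thus \cref{main}, which is the core result of this paper, already yields \cref{conjecture} within this subfamily; in fact it provides the strictly stronger statement that the offending parameter set has Hausdorff dimension below $3$.

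For the full conjecture my plan would be to extend the renormalization scheme of this paper to the general $n$-ITM setting. The steps are: (i) for each combinatorial type of $n$-ITM, define a Rauzy-type induction producing a sub-ITM of smaller total length whose combinatorial type belongs to a finite list obtained by tracking the cyclic order of the endpoints and their images; (ii) encode the resulting combinatorics as a simplicial system in the sense of \cite{Fougeron20}, identifying the infinite type locus with the invariant fractal of parameters whose renormalization trajectory never "exits" to a finite type stratum (such an exit corresponds to the stabilization $\Omega_N=\Omega_{N+1}$ of the attractor); (iii) verify the quickly escaping property for this simplicial system; (iv) invoke Theorem~1.5 of \cite{Fougeron20} to conclude Hausdorff dimension strictly less than $n$, hence Lebesgue measure zero.

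The principal obstacle is step (iii). Already for $3$-ITMs beyond the double rotation family the Rauzy-type graph of combinatorial types becomes intricate, and for larger $n$ there is no a priori reason that renormalization trajectories should escape neighbourhoods of the boundary of the parameter simplex at the uniform exponential rate required by the quickly escaping condition. Controlling this escape would demand a delicate combinatorial analysis of the Rauzy-type graph, in spirit analogous to the estimates available for Rauzy graphs of IETs but a priori harder because the non-surjectivity of an ITM allows for genuinely new branching in the induction. A secondary, more technical obstacle is to set up the induction so that it is non-degenerate on the simplex and so that its Jacobian estimates are compatible with the Fougeron framework; here the natural choice of cutting the largest subinterval, as in classical Rauzy, may need to be replaced by a more symmetric scheme adapted to the non-surjective setting.
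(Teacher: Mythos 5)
This statement is not something the paper proves: it is Boshernitzan and Kornfeld's open conjecture, quoted in the historical section, and the paper explicitly presents it as resolved only for special subclasses. There is therefore no proof in the paper to compare yours against, and no complete proof should be expected. Your treatment is consistent with this state of affairs. The deduction you make for the double-rotation subfamily is correct: the parameter space of \cref{eq:defdoublerotations} is three-dimensional, so \cref{main} (Hausdorff dimension strictly below $3$ for the infinite-type locus) forces Lebesgue measure zero; note, though, that the measure-zero statement for double rotations was already known from Bruin--Clack (\cref{infinite}), so \cref{main} is a refinement rather than the first proof of the conjecture in that subfamily. You could strengthen your partial result for free by citing Volk's theorem, quoted in \cref{sec:history}, which reduces every $3$-ITM to a rotation or a double rotation and hence settles the conjecture for all $n\le 3$.

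Your four-step program for general $n$ is a reasonable extrapolation of the paper's method, but it is a research plan, not a proof, and you are right that step (iii) is where it would live or die: the proof of \cref{quicklyescaping} in the paper leans on very specific combinatorial facts about the $5$-letter ITM permutations arising from double rotations (the \emph{big loop} produced by \cref{thm:acceleration}, the control of where the gap letter sits in $w_1$, and \cref{lemma:win}), none of which is known to persist for general $n$-ITMs, where the number of gaps and their interaction with singularities grow. A further gap you do not mention is step (i) itself: already the paper has to work to keep the number of continuity intervals from increasing under Rauzy-type induction (this is the whole point of the splitting into ITM permutations and of \cref{def:rrauzy}), and for general $n$-ITMs it is not known that a finite combinatorial list of types is preserved. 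So the conjecture remains open for $n\ge 4$, and your proposal should be read as a correct partial result plus an honest roadmap, not as a proof.
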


They also stated a few interesting problems, including the question about the number of invariant measures for ITMs, their mixing and spectral properties, and their complexity (in the sense of symbolic dynamics).
Finally, they made the first attempt to describe ITMs in terms of substitutions.

Buzzi and Hubert showed that the number of invariant measures for ITM cannot exceed the number of intervals (see their Theorem in~\cite{BuzziHubert04}).
Their bound is optimal for ITMs (see~\cite{BruinTroubetzkoy03} and ~\cite{Bruin07}).

A few answers on the questions posed by Boshernitzan and Kornfeld (including Conjecture~\ref{conjecture}) were obtained only for a special class of ITMs, called \emph{double rotations}.
We outline the known results in the next section.

\subsection{Results for Double rotations}\label{DRhistory}
In~\cite{BoshernitzanKornfeld95} it was shown that ITMs on two intervals are always of finite type.
\emph{Double rotations}, introduced by Suzuki, Ito and Aihara in~\cite{SuzukiItoAihara05}, appeared to be the first nontrivial subclass of ITM and the only one for which, to the best of our knowledge, the Conjecture~\ref{conjecture} is proved.
In their paper, Suzuki, Ito and Aihara described a renormalization procedure for double rotations (known as \emph{SIA induction}).
An application of this procedure revealed a complicated fractal structure of the set of parameters that correspond to the maps of infinite type.
Using the same renormalization, Bruin and Clark proved in~\cite{BruinClack12} the following

\begin{theorem}\label{infinite}
	For Lebesgue almost every point in the parameter space the corresponding double rotation is of finite type.
\end{theorem}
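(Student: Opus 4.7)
The plan is to build a renormalization on the parameter space $\Delta \subset [0,1)^3$ of triples $(\alpha,\beta,c)$ and to identify the infinite-type locus as its non-escaping set. First, I would define the SIA induction directly: given $T = T_{(\alpha,\beta,c)}$, take the first return of $T$ to an appropriately chosen subinterval adapted to the discontinuity $c$. A short case analysis shows that this first-return map is, up to rescaling, either another double rotation or a 2-ITM, i.e.\ an interval exchange on two intervals; the combinatorics of the return partitions $\Delta$ into finitely many cells on each of which the induced map $(\alpha,\beta,c)\mapsto (\alpha',\beta',c')$ is given by an explicit affine formula once the rescaling factor is divided out.

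Next, I would identify the finite-type parameters with the \emph{escaping set} of this renormalization. Every 2-ITM is of finite type by the result of~\cite{BoshernitzanKornfeld95}, and finite type is preserved by the inverse induction; conversely, for an infinite-type $T$ the first return cannot stabilise into a 2-ITM, so infinite type corresponds exactly to parameters whose entire forward orbit stays in the ``genuine double rotation'' cells. Passing to a compact projective cross-section (e.g.\ by normalising so that the current dynamical interval has unit length) this yields a piecewise affine map $\Phi$ whose non-escaping set $X$ is exactly the infinite-type locus that we must show has Lebesgue measure zero.

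The third step is the quantitative one. I would aim for a Borel--Cantelli type argument over the natural Markov partition of $\Delta$ indexed by finite admissible SIA itineraries. On each cylinder $[w]$ of length $n$, one compares the Lebesgue measure $|[w]|$ with that of the sub-cylinder $[w]_{\mathrm{esc}}$ of parameters that degenerate into a 2-ITM at step $n+1$. If one can establish a uniform lower bound $|[w]_{\mathrm{esc}}|/|[w]|\geq c_0>0$, then iterating gives
\[
  \bigl|\{x\in\Delta : \Phi^{j}(x) \text{ is non-degenerate for all } j\leq k\}\bigr| \leq (1-c_0)^{\lfloor k/n_0\rfloor} \xrightarrow{k\to\infty} 0,
\]
which forces $|X|=0$.

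The main obstacle I expect is precisely this uniform distortion/escape estimate. The SIA cocycle is not a priori uniformly hyperbolic: cells can become thin and skewed under iteration, and the induction is at best non-uniformly expanding, so a naive comparison of Jacobians is insufficient. To push the argument through I would try to isolate a finite collection of ``good'' loops in the SIA graph — finite compositions of induction matrices whose image covers a definite proportion of the normalised parameter set with bounded distortion — and then show that every infinite itinerary avoiding a degeneration necessarily traverses such good loops with positive asymptotic density. This is the renewal/recurrence heart of the proof and the analogue of the arguments used for Rauzy--Veech induction on IETs; here the combinatorics of the SIA graph is low-dimensional enough that a direct verification should be feasible, though still the most delicate part of the argument.
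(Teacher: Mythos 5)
Your outline reproduces, in spirit, the original Bruin--Clack strategy via the SIA induction, whereas this paper deliberately takes a different route: it introduces a new Rauzy-type induction ($\mathcal R$-induction on split $5$-letter ITM permutations), encodes it as a simplicial system in the sense of \cite{Fougeron20}, and obtains \cref{infinite} as a consequence of the stronger \cref{main} (Hausdorff dimension $<3$) by verifying the \emph{quickly escaping} property. Your steps 1--2 (renormalize, identify infinite type with the non-escaping set, use that rotations and $2$-ITMs are of finite type and that finite type pulls back through first-return maps) are sound and correspond to \cref{reduction} and the stopping rule in the paper's graph $G$.

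The genuine gap is exactly where you place the ``main obstacle,'' and it is not a technicality that a ``direct verification'' will dispatch: the uniform escape bound $|[w]_{\mathrm{esc}}|/|[w]|\geq c_0$ over \emph{all} cylinders is false as stated for this kind of induction. The cylinder maps are projectivizations of products of nonnegative matrices with unbounded norm and unbounded distortion, so the conditional probability of degenerating at the next step is not bounded below uniformly in $w$; this is precisely why neither Bruin--Clack nor the present paper can run a naive Borel--Cantelli argument. Your proposed repair --- finding ``good loops'' with bounded distortion and showing every non-degenerating itinerary traverses them with positive density --- is the correct idea, but it is the entire content of the hard part of the proof and is asserted rather than established. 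In the paper this work is done by \cref{lemma:win} (every letter wins and loses infinitely often along any infinite admissible path, proved by a combinatorial argument on the words $w_0,w_1$) together with \cref{quicklyescaping} (the system is strongly non-degenerating, hence quickly escaping by Theorem 3.13 of \cite{Fougeron20}); the measure/dimension estimate is then delegated to Theorem 1.5 of \cite{Fougeron20}, which packages the acceleration-plus-distortion analysis once and for all. To make your proof self-contained you would have to prove the positive-density-of-good-loops claim, which requires an analogue of \cref{lemma:win} for the SIA graph plus a Kerckhoff-type or renewal estimate controlling how long an itinerary can avoid a positivity/escape block; without that, the geometric decay in your displayed inequality does not follow.

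One minor inaccuracy: the degenerate case produced by the induction is a rotation on a subinterval (a singularity falling into the gap of the image, \cref{reduction}), not literally a $2$-interval exchange; this does not affect the conclusion since $2$-ITMs are of finite type by \cite{BoshernitzanKornfeld95}, but the case analysis in your first step should be phrased accordingly.
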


As mentioned above, no explicit construction of the ergodic, invariant with respect to SIA induction, measure was suggested in~\cite{BruinClack12}; however, the following conditional theorem was established:

\begin{theorem}\label{uniqueergodic}
	For any measure $\mu$ that is invariant with respect to the SIA induction $S$, the set of parameters that give rise to non-uniquely ergodic double rotations of infinite type has zero measure.
\end{theorem}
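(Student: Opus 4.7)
The plan is a Veech--Boshernitzan style argument that uses the SIA induction $S$ as a renormalization on parameter space and exploits the assumed $S$-invariance of $\mu$ via Poincar\'e recurrence.

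\textbf{Setup.} For $\omega$ in the infinite-type locus, one step of $S$ produces, by first-return on a distinguished sub-interval followed by rescaling, a new double rotation $T_{S\omega}$; the way the finer atoms assemble into the coarser ones is encoded by a nonnegative integer matrix $A(\omega)$, hence by a cocycle $A^{(n)}(\omega)=A(S^{n-1}\omega)\cdots A(\omega)$. Every $T_\omega$-invariant Borel probability measure corresponds to a nonnegative vector lying in the nested intersection $\bigcap_{n\ge 0} A^{(n)}(\omega)^{\top}\Delta$, with $\Delta$ the standard simplex. By the Buzzi--Hubert bound this intersection is a convex polytope with a bounded number of extreme rays, and non-unique ergodicity of $T_\omega$ amounts to the existence of at least two of them.

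\textbf{Collapse of the cone.} A classical lemma (going back to Boshernitzan and Veech in the IET setting) says that such a nested intersection collapses to a single ray whenever the cocycle $A^{(n)}$ contains, for arbitrarily large $n$, a strictly positive matrix factor whose column ratios are uniformly bounded. It therefore suffices to exhibit a cylinder $\mathcal{C}$ in parameter space corresponding to a \emph{positive loop} in the SIA combinatorial diagram, i.e.\ a finite itinerary whose associated product matrix is strictly positive, and to show that $\mu$-almost every orbit visits $\mathcal{C}$ infinitely often. Poincar\'e recurrence for $(S,\mu)$ reduces the latter to $\mu(\mathcal{C})>0$; non-ergodic $\mu$ are handled by passing to ergodic components.

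\textbf{Main obstacle.} The decisive step is to produce a positive-loop cylinder with positive mass on every ergodic component of every $S$-invariant $\mu$ supported on the infinite-type locus. Concretely, one must show that the SIA diagram restricted to this locus admits a positive loop based at each combinatorial vertex visited by recurrent orbits, so that at least one such cylinder meets the support of every ergodic component. For Rauzy--Veech induction on IETs this is classical, since every Rauzy class contains positive loops at all vertices, but the combinatorics of the SIA induction is significantly more intricate, and a direct case analysis of the SIA diagram is required. A secondary technical issue --- integrability of $\log\|A\|$ if one were to invoke Oseledets --- is sidestepped here by using only Poincar\'e recurrence together with the Buzzi--Hubert dimension bound.
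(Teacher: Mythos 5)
Your overall strategy --- renormalization cocycle, collapse of the nested cones of invariant measures under infinitely many returns to a positive block, Poincar\'e recurrence applied on ergodic components --- is the right one, and it is essentially the strategy behind the actual argument: this theorem is quoted in the present paper from \cite{BruinClack12}, whose proof encodes double rotations by the substitutions generated by the SIA induction and then invokes the unique ergodicity criterion of Lemma 17 of \cite{BruinTroubetzkoy03}, which is precisely a symbolic formulation of your ``collapse of the cone'' lemma. The two routes differ only in packaging: you work with the cocycle matrices directly, Bruin and Clack work with the associated substitutions.

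The problem is that you have labelled the decisive step an ``obstacle'' and left it unproved, and it is genuinely the hard part rather than a routine verification. What must be shown is that every ergodic component of every $S$-invariant measure on the infinite-type locus gives positive mass to some cylinder whose itinerary produces a strictly positive matrix block. Two things can go wrong a priori: (i) an ergodic component could be carried by orbits along which some interval eventually never loses, so that the corresponding column of the cocycle never fills in and no positive block ever occurs --- ruling this out is the content of a statement such as \cref{lemma:win} of this paper for the $\mathcal{R}$-induction, and the analogous fact has to be established separately for the SIA induction, whose combinatorics (in particular the presence of the gap interval) differ from the IET case; and (ii) even granting that every letter wins and loses infinitely often along almost every orbit, one still has to exhibit, at each recurrent combinatorial vertex, a finite admissible itinerary of positive measure whose product matrix is positive. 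Neither (i) nor (ii) is a formality here, and without them the argument does not close. As written, your text is a correct reduction of the theorem to these two combinatorial facts about the SIA diagram, not a proof of the theorem.
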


The proof relies on symbolic dynamics: double rotations are described in terms of substitutions. Our proof of Theorem \ref{unique} is very different in spirit.

It follows from~\cite{BuzziHubert04} that double rotations cannot carry more than $2$ invariant measures.
Quite explicit examples of  double rotations with exactly two invariant measures were constructed by Bruin and Troubetzkoy in~\cite{BruinTroubetzkoy03} (see Theorem $11$ in their paper).
In this paper the authors worked with a special subclass of double rotations.
They constructed a renormalization map which is slightly different from the one in~\cite{SuzukiItoAihara05} and used it to prove a weaker version of Theorem~\ref{infinite} (see their Theorem $6$).
This result was later generalized by Bruin for a slightly more extended subclass of ITMs (see Theorem 1 in~\cite{Bruin07}).

Finally, in~\cite{SkripchenkoTroubetzkoy15}, the authors focused on a different part of the program suggested by Boshernitzan and Kornfeld.
Namely, they studied billiards in rational polygons with so called spy mirrors and worked on the questions about complexity of the trajectories of these billiards.

\subsection{Other results}
In~\cite{Volk14} Volk extended Theorem~\ref{infinite} to the case of ITM on three intervals:

\begin{theorem}
	Lebesgue almost every ITM on three intervals is of finite type.
\end{theorem}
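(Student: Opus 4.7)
The plan is to develop a Rauzy-like induction on the space of 3-ITMs, analogous to the one constructed in this paper for double rotations, and to identify the set of parameters producing infinite-type maps with the non-escaping set of this induction, which must then have Lebesgue measure zero.

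A general 3-ITM is specified by three interval lengths $(l_1,l_2,l_3)$ with $l_1+l_2+l_3=1$ together with three translations $(t_1,t_2,t_3)$ subject to the requirement $T(I)\subset I$. The image $T(I)$ is a union of three translated intervals inside $I$, and its complement consists of at most four gaps whose total length measures the surjectivity defect of $T$. I would enlarge the alphabet $\alphabet$ to include these gaps, in the spirit of the parameter cone $\cone$ used here, so that a point in the induction space records simultaneously the lengths of the images and of the gaps.

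First I would define the induction as a first-return-type renormalization: at each step, locate the rightmost endpoint involved in the current configuration and perform a cut-and-swap that produces a new ITM on at most three intervals whose dynamics captures the long-term behaviour of the original. This descends to the projective parameter space and corresponds combinatorially to a directed graph --- a \emph{simplicial system} in the sense of~\cite{Fougeron20} --- whose vertices are the allowed combinatorial types of 3-ITMs and whose edges carry non-negative integer matrices. Crucially, finite-type parameters are those whose induction orbit reaches, after finitely many steps, either a gapless configuration (an IET regime, which is almost surely uniquely ergodic by Veech--Masur) or a 2-ITM configuration, which by~\cite{BoshernitzanKornfeld95} is always of finite type.

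Next I would show that infinite-type 3-ITMs coincide with the non-escaping set of the induction, and then apply the framework of~\cite{Fougeron20} exactly as is done here for double rotations: once one verifies that the associated simplicial system is quickly escaping, the non-escaping set has Hausdorff dimension strictly smaller than the ambient dimension and, in particular, zero Lebesgue measure. The main obstacle is precisely this verification. The combinatorial graph for general 3-ITMs is substantially richer than the one arising from double rotations, and one must enumerate its vertices and transitions and then bound, uniformly along cycles, the spectral contribution of the matrices associated with gap-preserving moves. I expect the decisive step to be the identification of the \emph{bad} cycles --- those in which gaps neither shrink relative to the intervals nor get absorbed into the IET skeleton --- and the proof that they are sufficiently rare and non-degenerate in the induced Markov structure to guarantee quick escape.
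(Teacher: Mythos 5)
Your proposal heads in a genuinely different direction from the paper, but as written it is a programme rather than a proof, and the two places where it is incomplete are exactly the hard parts. First, you assert that each renormalization step ``produces a new ITM on at most three intervals.'' This is false for the naive Rauzy induction on ITMs: unlike for IETs, inducing can \emph{increase} the number of continuity intervals (the paper has to fight precisely this even in the special case of double rotations, by splitting into five labelled intervals with one repeated letter and by choosing left versus right induction according to the position of the gap). For a general $3$-ITM, with up to three translated images overlapping arbitrarily and several gaps, you have given no argument that the combinatorial complexity stabilizes, so the ``directed graph of allowed combinatorial types'' you invoke is not yet known to be finite. Second, you explicitly defer the verification that the resulting simplicial system is quickly escaping (``The main obstacle is precisely this verification\dots I expect the decisive step to be\dots''). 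In the framework of~\cite{Fougeron20} that verification \emph{is} the theorem; without it the conclusion about Hausdorff dimension, and hence about Lebesgue measure, does not follow.

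For comparison, the paper's route to this statement (due to Volk) is much shorter and avoids building any new induction: one shows that every $3$-ITM can be reduced, by passing to a first return map, either to a rotation or to a double rotation (compare \cref{reduction}), and then the double rotation case is exactly \cref{infinite} of Bruin and Clack, which gives finite type for Lebesgue almost every parameter. Your approach, if completed, would be more self-contained and would presumably also yield a Hausdorff dimension bound for general $3$-ITMs, but completing it requires solving the two problems above, neither of which is routine.
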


This results follows from the fact that every ITM on three intervals can be reduced to a rotation or to a double rotation.

The orbit structure of ITMs was studied in~\cite{SchmelingTroubetzkoy00} where the sharp upper bound on the number of minimal sets of such a mapping is obtained.

\section{The \zorich-induction}

In the current section we introduce a new renormalization procedure for double rotations.
Double rotations (given by ~\eqref{eq:defdoublerotations}) will be considered as a subclass of $3$-ITMs.

We represent in \cref{3itm} two examples of $3$-ITMs.
Notice that it is enough to represent the interval domains and their images since this will define uniquely the translations.

For any $n$-ITM $T$ there is a subinterval such that the first return map of $T$ on this subinterval is an ITM with no gap at the extremal sides.
Hence, we can restrict our study to ITMs with no gaps at the extremal sides as in \cref{3itm}.

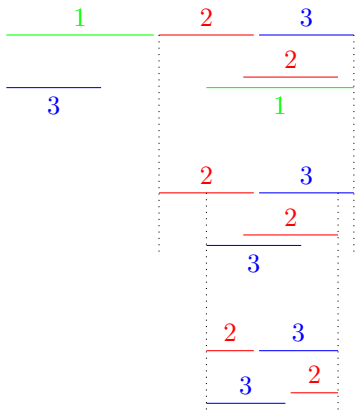
\begin{figure}[bt]
	\def \la {2.8cm} \def \lb {1.8cm} \def \lc {1.8cm} \def \tb {4.5cm} \def \epsi {.1cm} \def \epsip {.2cm}
	\begin{subfigure}{.45\textwidth}
		\begin{tikzpicture}[scale=.7]
			\coordinate (basetop) at (0,0);
			\coordinate (basebot) at (0,-1cm);

			\draw[color=green] (basetop) --++ (\la,0) coordinate (a+) node[pos=.5, above] {$1$};

			\draw (a+) ++ (\epsi,0) coordinate (b-);
			\draw[color=red] (b-) --++ (\lb,0)coordinate(b+) node[pos=.5, above] {$2$};

			\draw (b+) ++ (\epsi,0) coordinate (c-);
			\draw[color=blue] (c-) --++ (\lc,0)coordinate(c+) node[pos=.5, above] {$3$};

			\draw[color=blue] (basebot) --++ (\lc,0) coordinate (c1+) node[pos=.5, below] {$3$};

			\draw (basebot) ++ (\tb,\epsip) coordinate (b1-);
			\draw[color=red] (b1-) --++ (\lb,0)coordinate(b1+) node[pos=.5, above] {$2$};

			\draw (c1+) ++ (\lb + 2*\epsi,0) coordinate (a1-);
			\draw[color=green] (a1-) --++ (\la,0)coordinate(a1+) node[pos=.5, below] {$1$};

			\coordinate (basetop) at (0,-3cm);
			\coordinate (basebot) at (0,-4cm);

			\draw (basetop) ++ (\la,0) coordinate (a+);

			\draw (a+) ++ (\epsi,0) coordinate (b-);
			\draw[color=red] (b-) --++ (\lb,0)coordinate(b+) node[pos=.5, above] {$2$};

			\draw (b+) ++ (\epsi,0) coordinate (c-);
			\draw[color=blue] (c-) --++ (\lc,0)coordinate(c+) node[pos=.5, above] {$3$};

			\draw (basebot) ++ (\lc,0) coordinate (c1+);

			\draw (basebot) ++ (\tb,\epsip) coordinate (b1-);
			\draw[color=red] (b1-) --++ (\lb,0)coordinate(b1+) node[pos=.5, above] {$2$};

			\draw (c1+) ++ (\lb + 2*\epsi,0) coordinate (a1-);
			\draw[color=blue] (a1-) --++ (\lc,0)coordinate(a1+) node[pos=.5, below] {$3$};

			\draw  (b-) ++ (0,-1.2cm) coordinate (b-below);
			\draw  (c+) ++ (0,-1.2cm) coordinate (c+below);
			\draw[dotted] (\la+\epsi,0) -- (b-below);
			\draw[dotted] (\la+\lb+\lc+2*\epsi,0) -- (c+below);

			\draw  (b1+) ++ (0,1cm-\epsip) coordinate (b1+above);
			\draw  (a1-) ++ (0,1cm) coordinate (a1-above);

			\coordinate (basetop) at (0,-6cm);
			\coordinate (basebot) at (0,-7cm);

			\draw (basetop) ++ (\lc+\lb+2*\epsi,0) coordinate (b-);
			\draw (basetop) ++ (\la+\lb+\epsi,0) coordinate (b+);
			\draw[color=red] (b-) -- (b+) node[pos=.5, above] {$2$};

			\draw (b+) ++ (\epsi,0) coordinate (c-);
			\draw (basetop) ++ (\tb+\lb,0) coordinate (c+);
			\draw[color=blue] (c-) -- (c+) node[pos=.5, above] {$3$};

			\draw (basebot) ++ (\lc,0) coordinate (c1+);

			\draw (basebot) ++ (\tb,\epsip) coordinate (b1-);
			\draw (b1-) ++ (\lb,0) coordinate (b1+);

			\draw (c1+) ++ (\lb + 2*\epsi,0) coordinate (a1-);
			\draw (a1-) ++ (\lc,0) coordinate(a1+);

			\draw[color=red] (b1+) --++ ({-(\la-\lc-\epsi)}, 0) coordinate (b1p-) node[pos=.5, above] {$2$};
			\draw (b1p-) ++ ({-\epsi}, -\epsip) coordinate (c1p+);
			\draw[color=blue] (c1p+) --++ ({-\tb-\lb+(\la+\lb+2*\epsi)}, 0)coordinate (c1p-) node[pos=.5, above] {$3$};

			\draw  (b1+) ++ (0,-.3cm-\epsip) coordinate (b1+below);
			\draw  (a1-) ++ (0,-.3cm) coordinate (a1-below);
			\draw[dotted] (b1+above) -- (b1+below);
			\draw[dotted] (a1-above) -- (a1-below);

		\end{tikzpicture}
	\end{subfigure}
	\caption{Reduction of a $3$-ITM to a rotation.}
	\label{fig:red}
\end{figure}

\begin{lemma} \label{reduction}
	Let $T$ be a $3$-ITM with no gaps at on the extremal sides and a singularity belonging to a gap in the image.
	Then there exists a subinterval on which the first return map of $T$ is a rotation.
\end{lemma}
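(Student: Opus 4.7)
The plan is to exploit the hypothesis on the image gap to induce $T$ to a subinterval on which the first return map has only two continuity branches (a $2$-ITM), and then to invoke the Boshernitzan--Kornfeld theorem~\cite{BoshernitzanKornfeld95} that every $2$-ITM is of finite type, so that its attractor is an interval on which it acts as a rotation.

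First, I note the key \emph{persistence} property: if $G$ is the gap of $T(I)$ containing the singularity $c$, then $G \cap T(I) = \emptyset$ and $T^{n+1}(I) \subseteq T(I)$ give, by induction on $n$, $G \cap T^n(I) = \emptyset$ for every $n \ge 1$. Consequently no forward iterate of $T$ can enter $G$---in particular $c$ is never attained---and the whole attractor of $T$ lies in $I \setminus G$. This is the mechanism by which $c$ becomes ``dynamically invisible''.

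Next, I would identify an explicit first return subinterval $J$ on which the induced map $T'$ is a $2$-ITM. The idea is that the image missing a neighbourhood of $c$ forces one of the three branches of $T$ to be bypassed by the induced dynamics: a case analysis over the combinatorial configurations (the order of $T(I_1), T(I_2), T(I_3)$ along $I$, together with the position of $c$ inside $G$), illustrated by the reduction in \cref{fig:red}, yields the interval $J$ in each case. The hypothesis of no gaps at the extremal sides is used to keep the induction well-defined at the boundary of $I$, ensuring that the pieces of $T$ surviving on $J$ really do form a genuine $2$-ITM rather than a disconnected piecewise translation.

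Finally, by~\cite{BoshernitzanKornfeld95}, the induced map $T'$ is of finite type, so its attractor $\Omega' \subset J$ stabilises as a single subinterval on which $T'$ is a $2$-IET---that is, a rotation. Pulling $\Omega'$ back through the identification to $I$ produces a subinterval of the original $I$ on which the first return map of $T$ is this rotation, proving the lemma. The main obstacle, and the nontrivial content of the proof, is the middle step: executing the case analysis over all combinatorial configurations compatible with the hypothesis and verifying in each that the induced map on $J$ indeed has only two continuity branches rather than three.
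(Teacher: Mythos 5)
Your overall strategy is the same as the paper's: use the hypothesis to induce down to a map with only two translation branches, then conclude that this yields a rotation. Two points of comparison are worth making. First, the step you explicitly defer (finding the subinterval $J$ on which the induced map is a $2$-ITM) is precisely the content of the paper's proof: the paper takes the induced transformation on $I\cap TI\cap T^2I$ and observes that, because one singularity lies outside $TI$, only two intervals survive (see \cref{fig:red}). Be aware that your ``persistence'' observation does not by itself accomplish this: the fact that the gap $G$ is never revisited makes the \emph{point} $c$ unattainable, but $c$ still separates $T(I)$ into two pieces on which $T$ translates by different vectors, so the induced map a priori still has three branches; what must be argued is that an entire branch of $T$ is bypassed after two steps, which is a statement about the intervals, not about the point $c$. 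Since you yourself identify this as ``the nontrivial content'' and leave it unexecuted, the proposal as written has a genuine gap at its centre.

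Second, your endgame is genuinely different from the paper's. The paper does not invoke finite-typeness of $2$-ITMs: it notes that the two surviving intervals may overlap in the image, and performs one further induction onto the image to remove the overlap, obtaining a rotation directly. Your appeal to \cite{BoshernitzanKornfeld95} is a legitimate alternative, but as stated it over-claims: the attractor of a finite-type $2$-ITM is in general a finite union of intervals, not a single subinterval, so you should pass to the first return map on one component of the attractor and justify that this return map is an exchange of at most two intervals. With those two repairs the argument would go through, at the cost of being less self-contained than the paper's explicit two-step induction.
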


\begin{proof}
	Let $T$ be an ITM as in the statement of the Lemma, such as the one on top of \cref{fig:red}.
	We consider the induced transformation on $I\cap TI \cap T^2 I$, as represented in the middle drawing of \cref{fig:red}.
	Since one of the singularities is outside $TI$, after inducing on $I\cap TI\cap T^2 I$ we are left with only two intervals, possibly overlapping.
	If there is no overlap the transformation is already a rotation.
	Otherwise, there is a gap on at least one of the sides of the ITM.
	This gap is as big as the overlap between the two intervals.
	By considering the transformation induced on the image, we get an ITM without gaps at the sides.
	This guarantees that the overlap has vanished.
	In other words, the final transformation we have obtained is a rotation.
\end{proof}

\begin{convention}\label{noextremalgaps}
Unless explicitly mentioned, we will always assume that ITMs do not have gaps on the extremal sides and they act on the interval $I=[0,1)$.
\end{convention}

We now give a description of $3$-ITMs which mimics the Interval Exchange Transformations case, following our \cref{noextremalgaps}.

\begin{definition}\label{def:3ITM}
	A $3$-ITM can be described on an alphabet $\alphabet$ by
	\begin{itemize}
		\item two bijections $\pi_0$, $\pi_1\colon \alphabet \mapsto \{1, 2, 3\}$
		\item a length vector $\lambda_\alphabet \in \RR_+^\alphabet$,
		\item a translation parameter $t \in \RR_+$, measuring the displacement of the second interval.
	\end{itemize}
	Given an $\alpha\in\alphabet$ and a point
	\[
		x \in \left[\sum_{\pi_0(\beta) < \pi_0(\alpha)} \lambda_{\beta}, \sum_{\pi_0(\beta) \le \pi_0(\alpha)} \lambda_{\beta} \right]
	\]
	the ITM is defined by
	\[
		T(x) = x - \sum_{\pi_0(\beta) < \pi_0(\alpha)} \lambda_{\beta} + t,
	\]
	if $\pi_1(\alpha) = 2$, and
	\[
		T(x) = x - \sum_{\pi_0(\beta) < \pi_0(\alpha)} \lambda_{\beta} + \sum_{\pi_1(\beta) < \pi_1(\alpha)} \lambda_{\beta}.
	\]
	otherwise.
	In the previous formulae, an empty sum corresponds to $0$.
\end{definition}

From now on, we will always assume that the $3$-ITM is \emph{irreducible}.
This means that $\pi_1\circ\pi_0^{-1}(\{1\})\neq\{1\}$ and $\pi_1\circ\pi_0^{-1}(\{1,2\})\neq\{1,2\}$.
This is a natural assumption, since reducible $3$-ITMs can be reduced to $2$-ITMs, which are of finite type, see~\cite{BoshernitzanKornfeld95}.
Notice that the permutation $\pi_1=(3,1,2)$ yields a $3$-ITM with a gap on the extremal side, contradicting our \cref{noextremalgaps} and it is thus forbidden.

We now define our first renormalization procedure on $3$-ITMs, similar to the Rauzy-Veech induction for interval exchange transformations.
In order not to increase the number of intervals, we have to handle carefully the overlap in the image.
This induction is rather natural and easy to define; unfortunately, its dynamical properties are hard to describe.
So, in the next section we will define another induction that does not have this issue and then show that both of them are simply two versions of the same process.
More precisely, we can accelerate the induction we will introduce in the next section in order to recover the \zorich-induction.

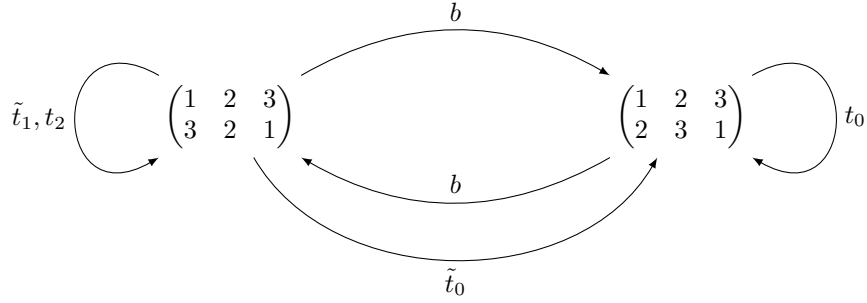
\begin{figure}[bt]
	\begin{tikzpicture}[scale=.7,>=stealth]
		\node (left) [left=2cm]
		{
			$
				\begin{pmatrix}
					1 & 2 & 3 \\
					3 & 2 & 1
				\end{pmatrix}
			$
		};
		\node (right) [right=2cm]{
			$
				\begin{pmatrix}
					1 & 2 & 3 \\
					2 & 3 & 1
				\end{pmatrix}
			$
		};
		\draw[->,>=latex] (left) to[bend left]
		node[above] {$b$}
		(right);
		\draw[->,>=latex] (right) to[bend left]
		node[above] {$b$}
		(left);
		\draw[->,>=latex] (left) to[out=300, in= 240, looseness=1]
		node[below] {$\tilde t_0$}
		(right);
		\draw[->,>=latex] (right) to [out=30, in=330, looseness=4]
		node[right] {$t_0$}
		(right);
		\draw[->,>=latex] (left) to [out=150, in=210, looseness=4]
		node[left] {$\tilde t_1, t_2$}
		(left);
	\end{tikzpicture}
	\caption{The Rauzy Diagram of the \zorich-induction.}
	\label{fig:zdiagram}
\end{figure}

\begin{definition}[\zorich-induction]\label{def:zorich}
	Let $T$ be a $3$-ITM with the overlap being on the left hand side of the interval $I$.
	The right \emph{$\mathcal{Z}$-induction} of $T$ consists in taking the first return map on a subinterval $I'$ of $I$ starting at the left end point of $I$ chosen as follows:
	\begin{itemize}
		\item If $\lambda_3>\lambda_1$ we choose $\widetilde{I}=[0,\lambda_3-\lambda_1)$. If $\widetilde{I}$ has no gap on the right-hand side, we define $I'=\widetilde{I}$.
		Otherwise we define $I'=\widetilde{I}\cap T\widetilde{I}$.
		In this case we say that top is \emph{winning} and that bottom \emph{losing}.
		\item If $\lambda_3<\lambda_1$ we choose $I'=[0,\lambda_1+\lambda_2)$.
		In this case we say that bottom is \emph{winning} and that top \emph{losing}.
	\end{itemize}

	If the overlap happens on the right hand side, we conjugate with a flip.
	More precisely, we apply the reflection around $1/2$, apply the right induction and then apply the reflection around the midpoint of the interval obtained after the induction.
\end{definition}

Since we are inducing on an interval of the form $[0,a)$, obtained by cutting the original interval on the right, we call the previous induction a \emph{right} induction.

In \cref{fig:zdiagram}, we represent the Rauzy diagram for the \zorich-induction.
This diagram is defined for the right induction.
Some steps will induce an ITM for which the overlapping intervals are on the right.
In this case we apply the around the midpoint of the interval obtained after the induction.
These cases are marked with a tilde.

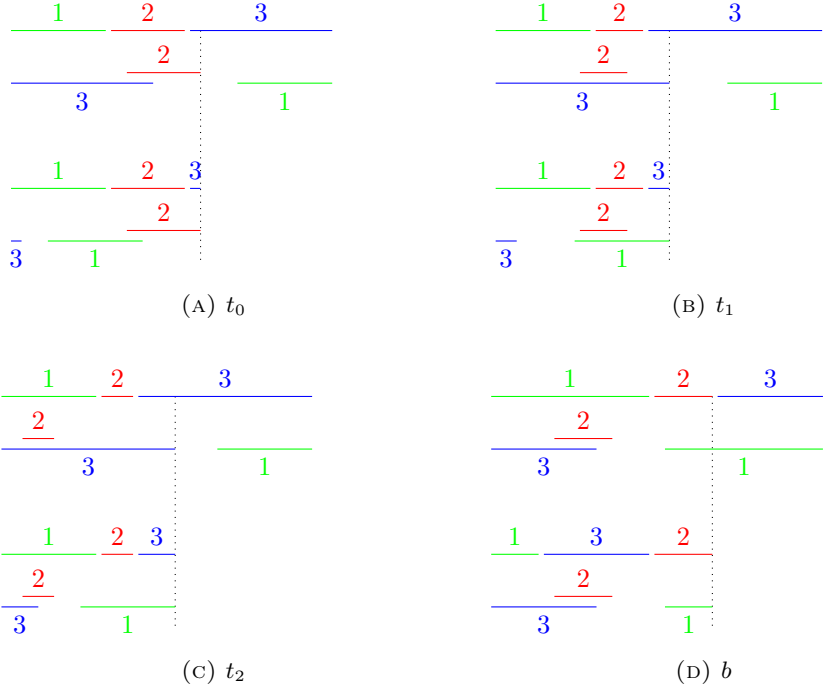
\begin{figure}[t]
	\def \la {1.8cm} \def \lb {1.4cm} \def \lc {2.7cm} \def \tb {2.2cm} \def \epsi {.1cm} \def \epsip {.2cm}
	\begin{subfigure}{.45\textwidth}
		\begin{tikzpicture}[scale=.7]
			\coordinate (basetop) at (0,0);
			\coordinate (basebot) at (0,-1cm);

			\draw[dotted] (\tb+\lb,0) -++ (0,-4.4cm);

			\draw[color=green] (basetop) --++ (\la,0) coordinate (a+) node[pos=.5, above] {$1$};

			\draw (a+) ++ (\epsi,0) coordinate (b-);
			\draw[color=red] (b-) --++ (\lb,0)coordinate(b+) node[pos=.5, above] {$2$};

			\draw (b+) ++ (\epsi,0) coordinate (c-);
			\draw[color=blue] (c-) --++ (\lc,0)coordinate(c+) node[pos=.5, above] {$3$};

			\draw[color=blue] (basebot) --++ (\lc,0) coordinate (c1+) node[pos=.5, below] {$3$};

			\draw (basebot) ++ (\tb,\epsip) coordinate (b1-);
			\draw[color=red] (b1-) --++ (\lb,0)coordinate(b1+) node[pos=.5, above] {$2$};

			\draw (c1+) ++ (\lb + 2*\epsi,0) coordinate (a1-);
			\draw[color=green] (a1-) --++ (\la,0)coordinate(a1+) node[pos=.5, below] {$1$};

			\coordinate (basetop) at (0,-3cm);
			\coordinate (basebot) at (0,-4cm);

			\draw[color=green]  (basetop) --++ (\la,0) coordinate (a+) node[pos=.5, above] {$1$};

			\draw (a+) ++ (\epsi,0) coordinate (b-);
			\draw[color=red]  (b-) --++ (\lb,0)coordinate(b+) node[pos=.5, above] {$2$};

			\draw (b+) ++ (\epsi,0) coordinate (c-);
			\draw[color=blue]  (c-) --++ ({\lc-(\lc-\tb+\la+2*\epsi)},0)coordinate(c+) node[pos=.5, above] {$3$};

			\draw[color=blue] (basebot) --++ ({\lc-(\lc-\tb+\la+2*\epsi)},0) coordinate (c1+) node[pos=.5, below] {$3$};

			\draw (basebot) ++ (\tb,\epsip) coordinate (b1-);
			\draw[color=red]  (b1-) --++ (\lb,0)coordinate(b1+) node[pos=.5, above] {$2$};

			\draw (c1+) ++ (\lc - \tb,0) coordinate (a1-);
			\draw[color=green] (a1-) --++ (\la,0)coordinate(a1+) node[pos=.5, below] {$1$};
		\end{tikzpicture}
		\subcaption{$t_0$}
	\end{subfigure}
	\def \la {1.8cm} \def \lb {.9cm} \def \lc {3.3cm} \def \tb {1.6cm} \def \epsi {.1cm} \def \epsip {.2cm}
	\begin{subfigure}{.45\textwidth}
		\begin{tikzpicture}[scale=.7]
			\coordinate (basetop) at (0,0);
			\coordinate (basebot) at (0,-1cm);

			\draw[dotted] (\lc,0) -++ (0,-4.4cm);

			\draw[color=green] (basetop) --++ (\la,0) coordinate (a+) node[pos=.5, above] {$1$};

			\draw (a+) ++ (\epsi,0) coordinate (b-);
			\draw[color=red] (b-) --++ (\lb,0)coordinate(b+) node[pos=.5, above] {$2$};

			\draw (b+) ++ (\epsi,0) coordinate (c-);
			\draw[color=blue] (c-) --++ (\lc,0)coordinate(c+) node[pos=.5, above] {$3$};

			\draw[color=blue] (basebot) --++ (\lc,0) coordinate (c1+) node[pos=.5, below] {$3$};

			\draw (basebot) ++ (\tb,\epsip) coordinate (b1-);
			\draw[color=red] (b1-) --++ (\lb,0)coordinate(b1+) node[pos=.5, above] {$2$};

			\draw (c1+) ++ (\lb + 2*\epsi,0) coordinate (a1-);
			\draw[color=green] (a1-) --++ (\la,0)coordinate(a1+) node[pos=.5, below] {$1$};

			\coordinate (basetop) at (0,-3cm);
			\coordinate (basebot) at (0,-4cm);

			\draw[color=green]  (basetop) --++ (\la,0) coordinate (a+) node[pos=.5, above] {$1$};

			\draw (a+) ++ (\epsi,0) coordinate (b-);
			\draw[color=red]  (b-) --++ (\lb,0)coordinate(b+) node[pos=.5, above] {$2$};

			\draw (b+) ++ (\epsi,0) coordinate (c-);
			\draw[color=blue]  (c-) --++ ({\lc-(\la+\lb+2*\epsi)},0)coordinate(c+) node[pos=.5, above] {$3$};

			\draw[color=blue] (basebot) --++ ({\lc-(\la+\lb+2*\epsi)},0) coordinate (c1+) node[pos=.5, below] {$3$};

			\draw (basebot) ++ (\tb,\epsip) coordinate (b1-);
			\draw[color=red]  (b1-) --++ (\lb,0)coordinate(b1+) node[pos=.5, above] {$2$};

			\draw (c1+) ++ (\lb+2*\epsi,0) coordinate (a1-);
			\draw[color=green] (a1-) --++ (\la,0)coordinate(a1+) node[pos=.5, below] {$1$};
		\end{tikzpicture}
		\subcaption{$t_1$}
	\end{subfigure}
	\def \la {1.8cm} \def \lb {.6cm} \def \lc {3.3cm} \def \tb {.4cm} \def \epsi {.1cm} \def \epsip {.2cm}
	\begin{subfigure}{.45\textwidth}
		\vspace*{.5cm}
		\begin{tikzpicture}[scale=.7]
			\coordinate (basetop) at (0,0);
			\coordinate (basebot) at (0,-1cm);

			\draw[dotted] (\lc,0) -++ (0,-4.4cm);

			\draw[color=green] (basetop) --++ (\la,0) coordinate (a+) node[pos=.5, above] {$1$};

			\draw (a+) ++ (\epsi,0) coordinate (b-);
			\draw[color=red] (b-) --++ (\lb,0)coordinate(b+) node[pos=.5, above] {$2$};

			\draw (b+) ++ (\epsi,0) coordinate (c-);
			\draw[color=blue] (c-) --++ (\lc,0)coordinate(c+) node[pos=.5, above] {$3$};

			\draw[color=blue] (basebot) --++ (\lc,0) coordinate (c1+) node[pos=.5, below] {$3$};

			\draw (basebot) ++ (\tb,\epsip) coordinate (b1-);
			\draw[color=red] (b1-) --++ (\lb,0)coordinate(b1+) node[pos=.5, above] {$2$};

			\draw (c1+) ++ (\lb + 2*\epsi,0) coordinate (a1-);
			\draw[color=green] (a1-) --++ (\la,0)coordinate(a1+) node[pos=.5, below] {$1$};

			\coordinate (basetop) at (0,-3cm);
			\coordinate (basebot) at (0,-4cm);

			\draw[color=green]  (basetop) --++ (\la,0) coordinate (a+) node[pos=.5, above] {$1$};

			\draw (a+) ++ (\epsi,0) coordinate (b-);
			\draw[color=red]  (b-) --++ (\lb,0)coordinate(b+) node[pos=.5, above] {$2$};

			\draw (b+) ++ (\epsi,0) coordinate (c-);
			\draw[color=blue]  (c-) --++ ({\lc-(\la+\lb+2*\epsi)},0)coordinate(c+) node[pos=.5, above] {$3$};

			\draw[color=blue] (basebot) --++ ({\lc-(\la+\lb+2*\epsi)},0) coordinate (c1+) node[pos=.5, below] {$3$};

			\draw (basebot) ++ (\tb,\epsip) coordinate (b1-);
			\draw[color=red]  (b1-) --++ (\lb,0)coordinate(b1+) node[pos=.5, above] {$2$};

			\draw (c1+) ++ (\lb+2*\epsi,0) coordinate (a1-);
			\draw[color=green] (a1-) --++ (\la,0)coordinate(a1+) node[pos=.5, below] {$1$};
		\end{tikzpicture}
		\subcaption{$t_2$}
	\end{subfigure}
	\def \la {3cm} \def \lb {1.1cm} \def \lc {2cm} \def \tb {1.2cm} \def \epsi {.1cm} \def \epsip {.2cm}
	\begin{subfigure}{.45\textwidth}
		\vspace*{.5cm}
		\begin{tikzpicture}[scale=.7]
			\coordinate (basetop) at (0,0);
			\coordinate (basebot) at (0,-1cm);

			\draw[dotted] (\la+\lb+\epsi,0) -++ (0,-4.4cm);

			\draw[color=green] (basetop) --++ (\la,0) coordinate (a+) node[pos=.5, above] {$1$};

			\draw (a+) ++ (\epsi,0) coordinate (b-);
			\draw[color=red] (b-) --++ (\lb,0)coordinate(b+) node[pos=.5, above] {$2$};

			\draw (b+) ++ (\epsi,0) coordinate (c-);
			\draw[color=blue] (c-) --++ (\lc,0)coordinate(c+) node[pos=.5, above] {$3$};

			\draw[color=blue] (basebot) --++ (\lc,0) coordinate (c1+) node[pos=.5, below] {$3$};

			\draw (basebot) ++ (\tb,\epsip) coordinate (b1-);
			\draw[color=red] (b1-) --++ (\lb,0)coordinate(b1+) node[pos=.5, above] {$2$};

			\draw (c1+) ++ (\lb + 2*\epsi,0) coordinate (a1-);
			\draw[color=green] (a1-) --++ (\la,0)coordinate(a1+) node[pos=.5, below] {$1$};

			\coordinate (basetop) at (0,-3cm);
			\coordinate (basebot) at (0,-4cm);

			\draw[color=green] (basetop) --++ (\la-\lc-\epsi,0) coordinate (a+) node[pos=.5, above] {$1$};

			\draw (a+) ++ (\epsi,0) coordinate (c-);
			\draw[color=blue] (c-) --++ (\lc,0)coordinate(c+) node[pos=.5, above] {$3$};

			\draw (c+) ++ (\epsi,0) coordinate (b-);
			\draw[color=red] (b-) --++ (\lb,0)coordinate(b+) node[pos=.5, above] {$2$};

			\draw[color=blue] (basebot) --++ (\lc,0) coordinate (c1+) node[pos=.5, below] {$3$};

			\draw (basebot) ++ (\tb,\epsip) coordinate (b1-);
			\draw[color=red] (b1-) --++ (\lb,0)coordinate(b1+) node[pos=.5, above] {$2$};

			\draw (c1+) ++ (\lb + 2*\epsi,0) coordinate (a1-);
			\draw[color=green] (a1-) --++ (\la-\lc-\epsi,0)coordinate(a1+) node[pos=.5, below] {$1$};
		\end{tikzpicture}
		\subcaption{$b$}
	\end{subfigure}
	\caption{The \zorich-induction for the first combinatoric.}
	\label{fig:z-induction}
\end{figure}

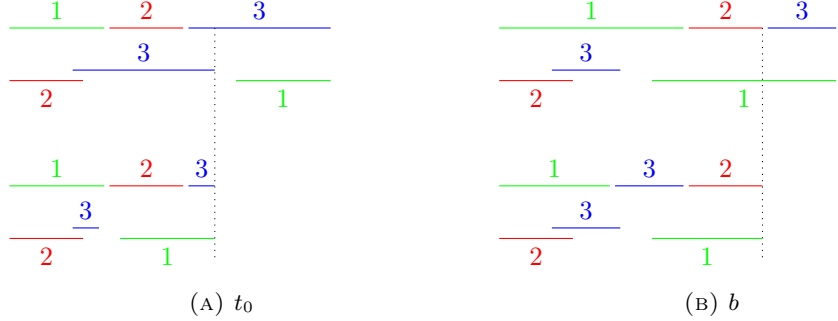
\begin{figure}[bt]
	\def \la {1.8cm} \def \lb {1.4cm} \def \lc {2.7cm} \def \tc {1.2cm} \def \epsi {.1cm} \def \epsip {.2cm}
	\begin{subfigure}{.45\textwidth}
		\begin{tikzpicture}[scale=.7]
			\coordinate (basetop) at (0,0);
			\coordinate (basebot) at (0,-1cm);

			\draw[dotted] (\tc+\lc,0) -++ (0,-4.4cm);

			\draw[color=green] (basetop) --++ (\la,0) coordinate (a+) node[pos=.5, above] {$1$};

			\draw (a+) ++ (\epsi,0) coordinate (b-);
			\draw[color=red] (b-) --++ (\lb,0)coordinate(b+) node[pos=.5, above] {$2$};

			\draw (b+) ++ (\epsi,0) coordinate (c-);
			\draw[color=blue] (c-) --++ (\lc,0)coordinate(c+) node[pos=.5, above] {$3$};

			\draw[color=red] (basebot) --++ (\lb,0) coordinate (b1+) node[pos=.5, below] {$2$};

			\draw (basebot) ++ (\tc,\epsip) coordinate (c1-);
			\draw[color=blue] (c1-) --++ (\lc,0)coordinate(c1+) node[pos=.5, above] {$3$};

			\draw (b1+) ++ (\lc + 2*\epsi,0) coordinate (a1-);
			\draw[color=green] (a1-) --++ (\la,0)coordinate(a1+) node[pos=.5, below] {$1$};

			\coordinate (basetop) at (0,-3cm);
			\coordinate (basebot) at (0,-4cm);

			\draw[color=green] (basetop) --++ (\la,0) coordinate (a+) node[pos=.5, above] {$1$};

			\draw (a+) ++ (\epsi,0) coordinate (b-);
			\draw[color=red] (b-) --++ (\lb,0)coordinate(b+) node[pos=.5, above] {$2$};

			\draw (b+) ++ (\epsi,0) coordinate (c-);
			\draw[color=blue] (c-) --++ ({\lc-\la-(\lb-\tc)-2*\epsi},0)coordinate(c+) node[pos=.5, above] {$3$};

			\draw[color=red] (basebot) --++ (\lb,0) coordinate (b1+) node[pos=.5, below] {$2$};

			\draw (basebot) ++ (\tc,\epsip) coordinate (c1-);
			\draw[color=blue] (c1-) --++ ({\lc-\la-(\lb-\tc)-2*\epsi},0)coordinate(c1+) node[pos=.5, above] {$3$};

			\draw (b1+) ++ ({\lc-\la-(\lb-\tc)},0) coordinate (a1-);
			\draw[color=green] (a1-) --++ (\la,0)coordinate(a1+) node[pos=.5, below] {$1$};
		\end{tikzpicture}
		\subcaption{$t_0$}
	\end{subfigure}
	\def \la {3.5cm} \def \lb {1.4cm} \def \lc {1.3cm} \def \tc {1.0cm} \def \epsi {.1cm} \def \epsip {.2cm}
	\begin{subfigure}{.45\textwidth}
		\begin{tikzpicture}[scale=.7]
			\coordinate (basetop) at (0,0);
			\coordinate (basebot) at (0,-1cm);

			\draw[dotted] (\la+\lb+\epsi,0) -++ (0,-4.4cm);

			\draw[color=green] (basetop) --++ (\la,0) coordinate (a+) node[pos=.5, above] {$1$};

			\draw (a+) ++ (\epsi,0) coordinate (b-);
			\draw[color=red] (b-) --++ (\lb,0)coordinate(b+) node[pos=.5, above] {$2$};

			\draw (b+) ++ (\epsi,0) coordinate (c-);
			\draw[color=blue] (c-) --++ (\lc,0)coordinate(c+) node[pos=.5, above] {$3$};

			\draw[color=red] (basebot) --++ (\lb,0) coordinate (b1+) node[pos=.5, below] {$2$};

			\draw (basebot) ++ (\tc,\epsip) coordinate (c1-);
			\draw[color=blue] (c1-) --++ (\lc,0)coordinate(c1+) node[pos=.5, above] {$3$};

			\draw (b1+) ++ (\lc + 2*\epsi,0) coordinate (a1-);
			\draw[color=green] (a1-) --++ (\la,0)coordinate(a1+) node[pos=.5, below] {$1$};

			\coordinate (basetop) at (0,-3cm);
			\coordinate (basebot) at (0,-4cm);

			\draw[color=green] (basetop) --++ (\la-\lc-\epsi,0) coordinate (a+) node[pos=.5, above] {$1$};

			\draw (a+) ++ (\epsi,0) coordinate (c-);
			\draw[color=blue] (c-) --++ (\lc,0)coordinate(c+) node[pos=.5, above] {$3$};

			\draw (c+) ++ (\epsi,0) coordinate (b-);
			\draw[color=red] (b-) --++ (\lb,0)coordinate(b+) node[pos=.5, above] {$2$};

			\draw[color=red] (basebot) --++ (\lb,0) coordinate (b1+) node[pos=.5, below] {$2$};

			\draw (basebot) ++ (\tc,\epsip) coordinate (c1-);
			\draw[color=blue] (c1-) --++ (\lc,0)coordinate(c1+) node[pos=.5, above] {$3$};

			\draw (b1+) ++ (\lc + 2*\epsi,0) coordinate (a1-);
			\draw[color=green] (a1-) --++ (\la-\lc-\epsi,0)coordinate(a1+) node[pos=.5, below] {$1$};
		\end{tikzpicture}
		\subcaption{$b$}
	\end{subfigure}
	\caption{The \zorich-induction for the second combinatoric.}
	\label{fig:z-induction_rot}
\end{figure}

We give a complete description of the combinatorial cases that can happen in \cref{fig:z-induction,fig:z-induction_rot}.
The pictures are labeled with $t$ if the top is winning and $b$ if the bottom is winning.

We remark that the case where a singularity on the top interval is in the gap does not appear in the list.
This is due to the fact stated in \cref{reduction} that they induce a rotation on a subinterval.
In this cases, we stop the induction.

\section{The \rauzy-induction}
In this section we construct our renormalization procedure and show that there is a way to associate a simplicial system to it.
\subsection{A different description of ITMs}

As we said above, the \zorich-induction is convenient to show that the number of intervals remains stable, but its dynamical properties are hard to describe.
Let us comment a bit more on this point.
The $\mathcal{Z}$-induction defines a matrix $Z$ that expresses the new length vector $\lambda\in\RR_+^\alphabet$ and the new translation parameter $t$, as defined in \cref{def:3ITM}, in terms of the old ones.
One would like this matrix to have non-negative coefficients in order to apply classical arguments, analogous to the Perron-Frobenius theorem.
Unfortunately, this is not the case.
The presence of negative coefficients in this matrix seems to be caused by the translation parameter $t$.
To get rid of this problem, in this section we give an alternative representation of ITMs which is combinatorially more involved but allows to describe the transformation $T$ solely in terms of its length vector $\lambda\in\RR_+^\alphabet$.

Given a $3$-ITM satisfying \cref{noextremalgaps}, we consider the two overlapping intervals in the image and cut them into two pieces each: one coinciding with the overlap and one for the remaining part of each interval.
In this way we obtain five intervals; two of them have, by construction, the same size.
An example of this splitting procedure is shown in \cref{fig:split}.

\usetikzlibrary{patterns,snakes}
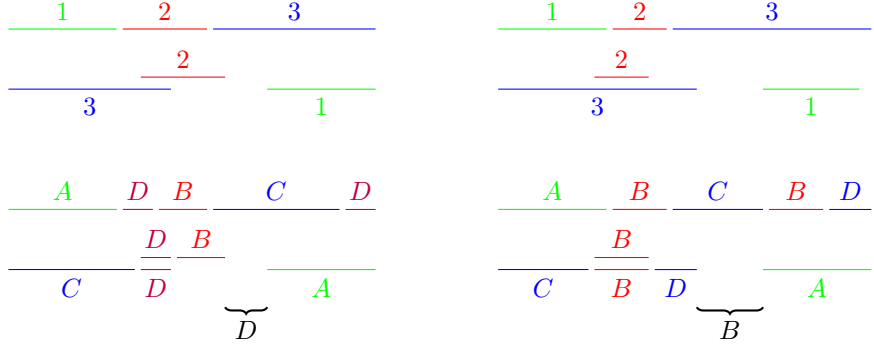
\begin{figure}[h]
	\def \la {1.8cm} \def \lb {1.4cm} \def \lc {2.7cm} \def \tb {2.2cm} \def \epsi {.1cm} \def \epsip {.2cm}
	\begin{subfigure}{.45\textwidth}
		\begin{tikzpicture}[scale=.8]
			\coordinate (basetop) at (0,0);
			\coordinate (basebot) at (0,-1cm);

			\draw[color=green] (basetop) --++ (\la,0) coordinate (a+) node[pos=.5, above] {$1$};

			\draw (a+) ++ (\epsi,0) coordinate (b-);
			\draw[color=red] (b-) --++ (\lb,0)coordinate(b+) node[pos=.5, above] {$2$};

			\draw (b+) ++ (\epsi,0) coordinate (c-);
			\draw[color=blue] (c-) --++ (\lc,0)coordinate(c+) node[pos=.5, above] {$3$};

			\draw[color=blue] (basebot) --++ (\lc,0) coordinate (c1+) node[pos=.5, below] {$3$};

			\draw (basebot) ++ (\tb,\epsip) coordinate (b1-);
			\draw[color=red] (b1-) --++ (\lb,0)coordinate(b1+) node[pos=.5, above] {$2$};

			\draw (c1+) ++ (\lb + 2*\epsi,0) coordinate (a1-);
			\draw[color=green] (a1-) --++ (\la,0)coordinate(a1+) node[pos=.5, below] {$1$};

			\coordinate (basetop) at (0,-3cm);
			\coordinate (basebot) at (0,-4cm);

			\draw[color=green]  (basetop) --++ (\la,0) coordinate (a+) node[pos=.5, above] {$A$};

			\draw (a+) ++ (\epsi,0) coordinate (b-);
			\draw[color=purple] (b-) --++ (\lc-\tb,0)coordinate(b+) node[pos=.5, above] {$D$};
			\draw (b+) ++ (\epsi,0) coordinate (b+);
			\draw[color=red] (b+) --++ (\lb+\tb-\lc-\epsi,0)coordinate(b++) node[pos=.5, above] {$B$};

			\draw (b++) ++ (\epsi,0) coordinate (c-);
			\draw[color=blue]  (c-) --++ (\tb-\epsi,0)coordinate(c+) node[pos=.5, above] {$C$};
			\draw (c+) ++ (\epsi,0) coordinate (c+);
			\draw[color=purple]  (c+) --++ (\lc-\tb,0)coordinate(c++) node[pos=.5, above] {$D$};

			\draw[color=blue] (basebot) --++ (\tb-\epsi,0) coordinate (c1+) node[pos=.5, below] {$C$};
			\draw (c1+) ++ (\epsi,0) coordinate (c1+);
			\draw[color=purple] (c1+) --++ (\lc-\tb,0) coordinate (c1++) node[pos=.5, below] {$D$};

			\draw (basebot) ++ (\tb,\epsip) coordinate (b1-);
			\draw[color=purple] (b1-) --++ (\lc-\tb,0)coordinate(b1+) node[pos=.5, above] {$D$};
			\draw (b1+) ++ (\epsi,0) coordinate (b1+);
			\draw[color=red] (b1+) --++ (\lb+\tb-\lc-\epsi,0)coordinate(b1++) node[pos=.5, above] {$B$};

			\draw (c1++) ++ (\lb + 2*\epsi,0) coordinate (a1-);
			\draw[color=green] (a1-) --++ (\la,0)coordinate(a1+) node[pos=.5, below] {$A$};
			\draw[thick,decoration={brace,raise=0.5cm},decorate] (a1-) --++ (-\lc+\tb-2*\epsi,0)coordinate(a1+) node [pos=0.5,anchor=north,yshift=-0.55cm] {$D$};
		\end{tikzpicture}
	\end{subfigure}
	\def \la {1.8cm} \def \lb {.9cm} \def \lc {3.3cm} \def \tb {1.6cm} \def \epsi {.1cm} \def \epsip {.2cm}
	\begin{subfigure}{.45\textwidth}
		\begin{tikzpicture}[scale=.8]
			\coordinate (basetop) at (0,0);
			\coordinate (basebot) at (0,-1cm);

			\draw[color=green] (basetop) --++ (\la,0) coordinate (a+) node[pos=.5, above] {$1$};

			\draw (a+) ++ (\epsi,0) coordinate (b-);
			\draw[color=red] (b-) --++ (\lb,0)coordinate(b+) node[pos=.5, above] {$2$};

			\draw (b+) ++ (\epsi,0) coordinate (c-);
			\draw[color=blue] (c-) --++ (\lc,0)coordinate(c+) node[pos=.5, above] {$3$};

			\draw[color=blue] (basebot) --++ (\lc,0) coordinate (c1+) node[pos=.5, below] {$3$};

			\draw (basebot) ++ (\tb,\epsip) coordinate (b1-);
			\draw[color=red] (b1-) --++ (\lb,0)coordinate(b1+) node[pos=.5, above] {$2$};

			\draw (c1+) ++ (\lb + 2*\epsi,0) coordinate (a1-);
			\draw[color=green] (a1-) --++ (\tb,0)coordinate(a1+) node[pos=.5, below] {$1$};

			\coordinate (basetop) at (0,-3cm);
			\coordinate (basebot) at (0,-4cm);

			\draw[color=green] (basetop) --++ (\la,0) coordinate (a+) node[pos=.5, above] {$A$};

			\draw (a+) ++ (\epsi,0) coordinate (b-);
			\draw[color=red] (b-) --++ (\lb,0)coordinate(b+) node[pos=.5, above] {$B$};

			\draw (b+) ++ (\epsi,0) coordinate (c-);
			\draw[color=blue] (c-) --++ (\tb-\epsi,0)coordinate(c+) node[pos=.5, above] {$C$};
			\draw (c+) ++ (\epsi,0) coordinate (c+);
			\draw[color=red] (c+) --++ (\lb,0)coordinate(c+) node[pos=.5, above] {$B$};
			\draw (c+) ++ (\epsi,0) coordinate (c+);
			\draw[color=blue] (c+) --++ (\lc-\tb-\lb-\epsi,0)coordinate(c+) node[pos=.5, above] {$D$};

			\draw[color=blue] (basebot) --++ (\tb-\epsi,0) coordinate (c+) node[pos=.5, below] {$C$};
			\draw (c+) ++ (\epsi,0) coordinate (c+);
			\draw[color=red] (c+) --++ (\lb,0)coordinate(c+) node[pos=.5, below] {$B$};
			\draw (c+) ++ (\epsi,0) coordinate (c+);
			\draw[color=blue] (c+) --++ (\lc-\tb-\lb-\epsi,0)coordinate(c+) node[pos=.5, below] {$D$};

			\draw (basebot) ++ (\tb,\epsip) coordinate (b1-);
			\draw[color=red] (b1-) --++ (\lb,0)coordinate(b1+) node[pos=.5, above] {$B$};

			\draw (c+) ++ (\lb + 2*\epsi,0) coordinate (a1-);
			\draw[color=green] (a1-) --++ (\la,0)coordinate(a1+) node[pos=.5, below] {$A$};

			\draw[thick,decoration={brace,raise=0.5cm},decorate] (a1-) --++ (-\lb-2*\epsi,0)coordinate(a1+) node [pos=0.5,anchor=north,yshift=-0.55cm] {$B$};
		\end{tikzpicture}
	\end{subfigure}
	\caption{The splitting of 3-ITMs.}
	\label{fig:split}
\end{figure}

Let us label the five intervals we have obtained with letters in an alphabet $\alphabet$ and let $w_0$ be the word formed by them.
Then $\alphabet$ has $4$ letters and $w_0$ has exactly one letter repeated twice.
Since ITMs act by translations, the length of an interval in the domain and the length of its image are the same.
Hence, the gap in the image has precisely the length of one of the five beginning intervals, which in turn is the length of the overlap in the image of the ITM.
If the label of this interval is $\alpha$, let us denote the gap with the label $\underline{\alpha}$.
We then form a word $w_1$ with letters in $\alphabet\cup\underline{\alphabet}$, describing the position of the intervals after applying the ITM.
Following the classical way to represent IETs, we write the two words one above another.
Here are two examples corresponding to \cref{fig:split}
\[
	\begin{pmatrix}
		A & D & B & C             & D \\
		C & D & B & \underline{D} & A
	\end{pmatrix},
	\qquad \qquad
	\begin{pmatrix}
		A & B             & C & B & D \\
		C & B & D & \underline{B} & A
	\end{pmatrix}.
\]

Let $\sigma$ be the map that associates to an integer $k$, with $1 \le k \le n$, the unique position in $w_1$ at which the letter $w_0(k)$ appears.

We call the pair $\left(\begin{smallmatrix} w_0\\ w_1\end{smallmatrix}\right)$ an \emph{ITM permutation}.
Let us consider such a permutation and a length vector $\lambda \in \RR_+^\alphabet$.
Given a word $w$ of length $|w|=n$ we define
\[
	\lambda(w) = \sum_{i=1}^n \lambda_{w(i)}.
\]
Let $w^{(k)}$ denote the prefix of length $k-1$ of $w$, i.e., the subword before the $k^\text{th}$ letter of $w$, for $1\le k \le n$, with $w^{(1)}$ being the empty word.
Given a point
\[
	x \in \left[\lambda\left(w_0^{(k)}\right), \lambda\left(w_0^{(k+1)}\right)\right],
\]
the ITM $T$ is the map defined by
\[
	F(x) = x + \lambda\left(w_1^{\left(\sigma(k)\right)}\right) - \lambda\left(w_0^{(k)}\right).
\]
As usual, an empty sum is defined to be $0$.

Notice that, with this new presentation, there is no need for a translation vector $t$ as in \cref{def:3ITM}.
The price we have to pay is the increased combinatorial complexity.

\subsection{Definition of the \rauzy-induction}
We start by defining a direct generalization of the classical left and right Rauzy induction on interval exchange transformations to our setting.

\begin{definition}[Rauzy induction]
	The \emph{right Rauzy induction} associates to an ITM defined on $I = [0,1)$ the first return map to the subinterval $[0, x)$ where $x$ is the rightmost singularity or image of a singularity in the interior of $I$.

	The \emph{left Rauzy induction} is defined symmetrically.
\end{definition}

We remark that the Rauzy induction yields in some of these case ITMs which have a gap on the right side, thus violating our \cref{noextremalgaps}, see \cref{fig:RauzyTwice} for an example of this.
Moreover, this induction induces a new ITM but, contrary to the case of IETs, it may increase the number of intervals of continuity.
Specifically, this happens in the cases where the winning interval appears twice in the combinatorial word.
To get around this problem, we will use the following choice of left and right induction, depending on the position of the gap.

\begin{definition}[\rauzy-induction]
	We define the \rauzy-induction as follows:
	\begin{itemize}
		\item if the hole is in one of the last two intervals of $w_1$, we apply the right Rauzy induction;
		\item if the hole is one of the two first intervals of $w_1$, we apply the left Rauzy induction.
	\end{itemize}
\end{definition}

\begin{remark}
	Another option is to always apply the right Rauzy induction and flip the interval $I$ if the hole is not in the last two positions of $w_1$.
	The two choices produce the same graph.
	Hence, we will switch freely from one to the other depending on which one is more convenient for the task at hand.
\end{remark}

Our first goal in this section is to show the following Lemma which informally says that the \zorich-induction is an accelerated version of the \rauzy-induction.

\begin{lemma}\label{thm:acceleration}
	If no singularity of a $3$-ITM belongs to the gap, there exist a finite number of steps of the \rauzy-induction which give the same $3$-ITM as the right \zorich-induction.
\end{lemma}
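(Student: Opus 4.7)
The plan is to establish the lemma by a case analysis over the combinatorial configurations of the $3$-ITM, combined with a monotonicity argument on the lengths of the Rauzy cut intervals. First I would observe that the hypothesis ``no singularity belongs to the gap'' rules out exactly the configurations that would invoke \cref{reduction}, so the ITM remains a genuine $3$-ITM throughout the induction. By the remark following the definition of the \rauzy-induction, I may assume without loss of generality that the overlap is on the left, so a right induction is applied in both procedures.

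Next, I would work in the four-letter presentation $(w_0,w_1)$ developed at the beginning of the section rather than in the original three-interval description. In this presentation the Rauzy step is a purely combinatorial rule acting on the pair of words, and the apparent issue of ``the winning interval appearing twice'' is absorbed into the larger alphabet $\alphabet$. This allows the comparison with the \zorich-induction to be phrased entirely in terms of the prefix sums $\lambda(w_0^{(k)})$ and $\lambda(w_1^{(\sigma(k))})$, with no need to reason about the translation parameter $t$ of \cref{def:3ITM}.

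Then, for each vertex of the Rauzy diagram in \cref{fig:zdiagram} and each possibility ($t_0$, $\tilde t_0$, $t_1$, $t_2$, $\tilde t_1$, $b$), I would identify the Zorich target interval $I'$ of \cref{def:zorich} as one of the intervals $[0,\lambda(w_0^{(k)}))$ determined by the evolved combinatorial word, and then iterate the right Rauzy induction. The key observation is that each right Rauzy step returns to the first point of the form $[0,x)$ where $x$ is the rightmost singularity or image of a singularity in $(0,1)$; consequently the sequence of Rauzy cut points is strictly decreasing and drawn from the finite set of $T$-images of the original singularities. Since $I'$ itself belongs to this finite set and lies to the left of every intermediate Rauzy cut, the process must reach it in finitely many steps and produce the same ITM as one Zorich step. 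This is precisely the sense in which \zorich{} is an acceleration of \rauzy, in direct parallel with the classical relation between the two inductions on IETs.

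The main obstacle I anticipate is the bookkeeping: one must verify in each sub-case (top versus bottom winning, with or without the additional intersection with $T\widetilde I$ in \cref{def:zorich}, and for both underlying combinatorics of \cref{fig:z-induction,fig:z-induction_rot}) that the chain of successive Rauzy permutations ends at the combinatorial type prescribed by the Zorich arrow. This is a finite enumeration and each individual case is a routine computation once the dictionary between the three-interval and four-letter presentations is in place, but the total number of sub-configurations to check makes it the most labour-intensive portion of the argument.
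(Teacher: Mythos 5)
Your overall strategy coincides with the paper's: view one \zorich-step as a grouping of right Rauzy steps, note that the Zorich cut point $s_z$ satisfies $s_z\le s_r$ and cannot be skipped over because it remains an image of a singularity, and argue that the decreasing sequence of Rauzy cut points must land on $s_z$. The gap is in your finiteness argument. You assert that the successive Rauzy cut points are ``drawn from the finite set of $T$-images of the original singularities.'' That is not justified and, as stated, is false: after $n$ steps of the induction the images of singularities of the induced map are images of the original singularities under iterates $T^k$ with $k$ growing, so the ambient set of candidate cut points is not a fixed finite set. A strictly decreasing sequence bounded below by $s_z$ could a priori be infinite, so ``$s_z$ is in the set and lies below every intermediate cut'' does not yet yield termination. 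The paper closes this hole with a dynamical observation you omit: in the regime where the top interval wins in the \zorich-sense, all the winning/losing comparisons performed by the intermediate Rauzy steps involve intervals whose images lie to the \emph{left} of $s_z$, so no new singularity or image of a singularity is ever created in $(s_z,1)$; the finitely many marked points initially present there are then consumed one per step, and the cut reaches $s_z$ exactly when the gap interval finally loses. (In the bottom-winning cases the count is simply two or three steps.)

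A second, smaller omission: you need to check that the intermediate Rauzy steps remain inside the well-defined cases of \cref{def:rrauzy} — in particular that the gap interval never \emph{wins} before the cut reaches $s_z$, which would trigger the rotation/stopping case — and that the hole stays within the last two positions of $w_1$ so that the right induction continues to apply. Your opening remark that the hypothesis rules out \cref{reduction} covers only the initial ITM, not the intermediate ones; the paper verifies this persistence explicitly (the first case of the induction keeps the gap with at most one interval to its right, and the gap ends up losing, not winning). Finally, be aware that a purely combinatorial enumeration over the arrows of the Rauzy diagram cannot by itself replace these points, since which intermediate case occurs and how many steps of the first type are taken depend on the lengths, not only on the combinatorics.
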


Before giving the proof of this result, let us give a more detailed description of the right Rauzy induction for an irreducible ITM in which the gap interval appears in $w_1$ at the last or penultimate position.
A similar description, obtained by applying a central symmetry to the intervals, could be given for the left Rauzy induction for an irreducible ITM in which the gap appears in the first two positions of $w_1$.
We will later show that these cases are indeed the only ones appearing in the \rauzy-induction as in the assumption of the Lemma.
Notice that the induction will induce ITM permutations that always have $5$ intervals in $w_0$ and $w_1$ but do not necessarily correspond to 3-ITMs.

We recall that a \emph{substitution} $\tau$ is an application from an alphabet $\alphabet$ to the set of nonempty finite words on the $\alphabet$, which can be extended to a morphism of $\alphabet^*$ by concatenation: $\tau(WW')=\tau(W)\tau(W')$, see, e.g.,~\cite{Pytheas}.

\begin{definition}\label{def:rrauzy}
	Consider an irreducible ITM with 5 letters in each words $w_0$ and $w_1$.
	Assume it has a single gap that appears as one of the last two intervals in $w_1$.
	Then we have another description of the right Rauzy induction which goes as follows.

	Start by comparing the rightmost interval in the domain with the rightmost interval in the image.
	In other words, we compare the lengths of $\lambda_{w_0(5)}$ and $\lambda_{w_1(5)}$.
	We put aside the case when $\lambda_{w_0}(5)=\lambda_{w_1}(5)$ for which it is clear that the double rotation reduces to a rotation.
	We then say that the larger one is the \emph{winning} label and the smaller interval is the \emph{losing} one.
	Let $\epsilon$ be equal to $0$ if the top interval is winning and $1$ otherwise, then $a:=w_\epsilon(5)$ is the label of the winning interval and $b:=w_{1-\epsilon}(5)$ is the losing one.
	Let $s$ be the substitution defined by $a \mapsto a \cdot b$.
	The \emph{right Rauzy induction} acts on the combinatorics of ITMs as follows, where $\hat w$ denotes the word $w$ after induction:\\

	\noindent$\bullet$ if $a$ occurs exactly once in each of the two words
		      \[
			      \begin{split}
				      \hat{w}_\epsilon &= w_\epsilon\\
				      \hat{w}_{1-\epsilon} &= s\left(w_{1-\epsilon}(1, \dots, n-1)\right),
			      \end{split}
		      \]
	see \cref{fig:RauzyOnce}.
	Notice that in this case the losing interval could be the hole.\\

  \noindent$\bullet$ if $a$ occurs twice in $w_0$ and the intervals are in the following setting
		      \[
			      \begin{pmatrix}
				      \dots &              & a \\
				      \dots & \underline a & b
			      \end{pmatrix}
		      \]
		      then,
		      \[
			      \begin{split}
				      \hat{w}_0 &= s\left(w_0 \left(1, \dots, n-1\right)\right) \\
				      \hat{w}_1 &= s\left(w_1 \left(1, \dots, n-2\right)\right) \cdot \, \underline b,
			      \end{split}
		      \]
	see \cref{fig:RauzyTwice}.\\

  \noindent$\bullet$ if the winning interval is to bottom one corresponding to the gap, the initial 3-ITM has a singularity which belongs to a gap in the image.
	Hence, according to \cref{reduction}, the induced ITM is a rotation on a subinterval.
	In this case we stop the induction.

	Moreover, in each of the first two cases, the Rauzy induction acts on length vector by
	\[
		\begin{split}
			\hat{\lambda}_a &= \lambda_a - \lambda_b \\
			\hat{\lambda}_b &= \lambda_b.
		\end{split}
	\]
\end{definition}

\begin{figure}[h]
	\def \la {1.8cm} \def \lb {1.4cm} \def \lc {2.7cm} \def \tb {2.2cm} \def \epsi {.1cm} \def \epsip {.2cm}
	\def \lA {\la-\lc+\tb-\epsi}
	\begin{subfigure}{.45\textwidth}
		\begin{tikzpicture}[scale=.8]
			\coordinate (basetop) at (0,0cm);
			\coordinate (basebot) at (0,-1cm);

			\draw[dotted] (\la+\lb+\tb+\epsi,0) -++ (0,-4.4cm);

			\draw[color=green]  (basetop) --++ (\la,0) coordinate (a+) node[pos=.5, above] {$A$};

			\draw (a+) ++ (\epsi,0) coordinate (b-);
			\draw[color=purple] (b-) --++ (\lc-\tb,0)coordinate(b+) node[pos=.5, above] {$B$};
			\draw (b+) ++ (\epsi,0) coordinate (b+);
			\draw[color=red] (b+) --++ (\lb+\tb-\lc-\epsi,0)coordinate(b++) node[pos=.5, above] {$D$};

			\draw (b++) ++ (\epsi,0) coordinate (c-);
			\draw[color=blue]  (c-) --++ (\tb-\epsi,0)coordinate(c+) node[pos=.5, above] {$C$};
			\draw (c+) ++ (\epsi,0) coordinate (c+);
			\draw[color=purple]  (c+) --++ (\lc-\tb,0)coordinate(c++) node[pos=.5, above] {$B$};

			\draw[color=blue] (basebot) --++ (\tb-\epsi,0) coordinate (c1+) node[pos=.5, below] {$C$};
			\draw (c1+) ++ (\epsi,0) coordinate (c1+);
			\draw[color=purple] (c1+) --++ (\lc-\tb,0) coordinate (c1++) node[pos=.5, below] {$B$};

			\draw (c1++) ++ (\epsi,0) coordinate (b1+);
			\draw[color=red] (b1+) --++ (\lb+\tb-\lc-\epsi,0)coordinate(b1++) node[pos=.5, below] {$D$};

			\draw (c1++) ++ (\lb + 2*\epsi,0) coordinate (a1-);
			\draw[color=green] (a1-) --++ (\la,0)coordinate(a1+) node[pos=.5, below] {$A$};

			\coordinate (basetop) at (0,-3cm);
			\coordinate (basebot) at (0,-4cm);

			\draw[color=green]  (basetop) --++ (\lA,0) coordinate (a+) node[pos=.5, above] {$A$};

			\draw (a+) ++ (\epsi,0) coordinate (b-);
			\draw[color=purple] (b-) --++ (\lc-\tb,0)coordinate(b+) node[pos=.5, above] {$B$};
			\draw (b+) ++ (\epsi,0) coordinate (b+);
			\draw[color=purple] (b+) --++ (\lc-\tb,0) coordinate(b+) node[pos=.5, above] {$B$};
			\draw (b+) ++ (\epsi,0) coordinate (b+);
			\draw[color=red] (b+) --++ (\lb+\tb-\lc-\epsi,0)coordinate(b++) node[pos=.5, above] {$D$};

			\draw (b++) ++ (\epsi,0) coordinate (c-);
			\draw[color=blue]  (c-) --++ (\tb-\epsi,0)coordinate(c+) node[pos=.5, above] {$C$};

			\draw[color=blue] (basebot) --++ (\tb-\epsi,0) coordinate (c1+) node[pos=.5, below] {$C$};
			\draw (c1+) ++ (\epsi,0) coordinate (c1+);
			\draw[color=purple] (c1+) --++ (\lc-\tb,0) coordinate (c1++) node[pos=.5, below] {$B$};

			\draw (c1++) ++ (\epsi,0) coordinate (b1+);
			\draw[color=red] (b1+) --++ (\lb+\tb-\lc-\epsi,0)coordinate(b1++) node[pos=.5, below] {$D$};

			\draw (c1++) ++ (\lb + 2*\epsi,0) coordinate (a1-);
			\draw[color=green] (a1-) --++ (\lA,0)coordinate(a1+) node[pos=.5, below] {$A$};
		\end{tikzpicture}
		\subcaption{The winner occurs once.}
		\label{fig:RauzyOnce}
	\end{subfigure}
	\def \la {.5cm} \def \lb {2.4cm} \def \lc {2.2cm} \def \tb {.9cm} \def \epsi {.1cm} \def \epsip {.2cm}
	\def \lA {\la-\lc+\tb-\epsi}
	\begin{subfigure}{.45\textwidth}
		\begin{tikzpicture}[scale=.8]
			\coordinate (basetop) at (0,0cm);
			\coordinate (basebot) at (0,-1cm);

			\draw[dotted] (\la+\lb+\tb+\epsi,0) -++ (0,-4.4cm);

			\draw[color=green]  (basetop) --++ (\la,0) coordinate (a+) node[pos=.5, above] {$A$};

			\draw (a+) ++ (\epsi,0) coordinate (b-);
			\draw[color=purple] (b-) --++ (\lc-\tb,0)coordinate(b+) node[pos=.5, above] {$B$};
			\draw (b+) ++ (\epsi,0) coordinate (b+);
			\draw[color=red] (b+) --++ (\lb+\tb-\lc-\epsi,0)coordinate(b++) node[pos=.5, above] {$D$};

			\draw (b++) ++ (\epsi,0) coordinate (c-);
			\draw[color=blue]  (c-) --++ (\tb-\epsi,0)coordinate(c+) node[pos=.5, above] {$C$};
			\draw (c+) ++ (\epsi,0) coordinate (c+);
			\draw[color=purple]  (c+) --++ (\lc-\tb,0)coordinate(c++) node[pos=.5, above] {$B$};

			\draw[color=blue] (basebot) --++ (\tb-\epsi,0) coordinate (c1+) node[pos=.5, below] {$C$};
			\draw (c1+) ++ (\epsi,0) coordinate (c1+);
			\draw[color=purple] (c1+) --++ (\lc-\tb,0) coordinate (c1++) node[pos=.5, below] {$B$};

			\draw (c1++) ++ (\epsi,0) coordinate (b1+);
			\draw[color=red] (b1+) --++ (\lb+\tb-\lc-\epsi,0)coordinate(b1++) node[pos=.5, below] {$D$};

			\draw (c1++) ++ (\lb + 2*\epsi,0) coordinate (a1-);
			\draw[color=green] (a1-) --++ (\la,0)coordinate(a1+) node[pos=.5, below] {$A$};

			\coordinate (basetop) at (0,-3cm);
			\coordinate (basebot) at (0,-4cm);

			\draw[color=green]  (basetop) --++ (\la,0) coordinate (a+) node[pos=.5, above] {$A$};

			\draw (a+) ++ (\epsi,0) coordinate (b-);
			\draw[color=purple] (b-) --++ (\lc-\tb-\la-\epsi,0)coordinate(b+) node[pos=.5, above] {$B$};
			\draw (b+) ++ (\epsi,0) coordinate (b+);
			\draw[color=green] (b+) --++ (\la,0)coordinate(b+) node[pos=.5, above] {$A$};
			\draw (b+) ++ (\epsi,0) coordinate (b+);
			\draw[color=red] (b+) --++ (\lb+\tb-\lc-\epsi,0)coordinate(b++) node[pos=.5, above] {$D$};

			\draw (b++) ++ (\epsi,0) coordinate (c-);
			\draw[color=blue]  (c-) --++ (\tb-\epsi,0)coordinate(c+) node[pos=.5, above] {$C$};

			\draw[color=blue] (basebot) --++ (\tb-\epsi,0) coordinate (c1+) node[pos=.5, below] {$C$};
			\draw (c1+) ++ (\epsi,0) coordinate (c1+);
			\draw[color=purple] (c1+) --++ (\lc-\tb-\la-\epsi,0) coordinate (c1+) node[pos=.5, below] {$B$};
			\draw (c1+) ++ (\epsi,0) coordinate (c1+);
			\draw[color=green] (c1+) --++ (\la,0) coordinate (c1++) node[pos=.5, below] {$A$};

			\draw (c1++) ++ (\epsi,0) coordinate (b1+);
			\draw[color=red] (b1+) --++ (\lb+\tb-\lc-\epsi,0)coordinate(b1++) node[pos=.5, below] {$D$};
		\end{tikzpicture}
		\subcaption{The winner occurs twice.}
		\label{fig:RauzyTwice}
	\end{subfigure}
	\caption{The Right \rauzy-induction.}
\end{figure}

The moves of the Rauzy induction in all these cases can be represented as in~\cite{Fougeron20} by a simplicial system\footnote{See \cref{sec:simplicial} for the definition of simplicial system.} associated to a labeled graph whose edges are described for a given ITM permutation in \cref{fig:graphG}.
The labels in the graph $G$ correspond to the losing interval.

\begin{figure}[!h]
	\begin{subfigure}{.8\textwidth}
		\begin{tikzpicture}[shorten >=1pt,node distance=4cm,on grid,auto]
			\node (sigma) {$\begin{pmatrix} v_0 \ a \\ v_1 \ b \end{pmatrix} $};
			\node (nsigma) [left= of sigma] {$\begin{pmatrix} s\left( v_0\right) \\  v_1 \ b \end{pmatrix}$};
			\node (1sigma) [right= of sigma] {$\begin{pmatrix}  v_0 \ a\\ s\left( v_1\right) \end{pmatrix} $};

			\path[->] (sigma) edge node[above] {$b$} (1sigma);
			\path[->] (sigma) edge node[above] {$a$} (nsigma);
		\end{tikzpicture}
	\end{subfigure}
	\begin{subfigure}{.8\textwidth}
		\begin{tikzpicture}[shorten >=1pt,node distance=4cm,on grid,auto]
			\node (sigma) {$\begin{pmatrix}  v_0 \ \ a \\  v_1 \ \underline a \ b\end{pmatrix} $};
			\node (nsigma) [right= of sigma] {$\begin{pmatrix} s\left( v_0\right) \\ s\left( v_1\right) \ \underline b \end{pmatrix} $};
			\node (1sigma) [left= of sigma] {$\begin{pmatrix} s\left(  v_0\right) \\  v_1 \ \underline a \ b \end{pmatrix}$};

			\path[->] (sigma) edge node[above] {$a$} (1sigma);
			\path[->] (sigma) edge node[above] {$b$} (nsigma);
		\end{tikzpicture}
	\end{subfigure}
	\begin{subfigure}{.8\textwidth}
		\begin{tikzpicture}[shorten >=1pt,node distance=4cm,on grid,auto]
			\node (sigma) {$\begin{pmatrix}  v_0 \ a \ b \\  v_1 \ \ \underline a \end{pmatrix} $};
			\node (nsigma) [right= of sigma] {rotation};
			\node (1sigma) [left= of sigma] {$\begin{pmatrix}  v_0 \ a \ b \\ s\left( v_1\right) \end{pmatrix}$};

			\path[->] (sigma) edge node[above] {$a$} (1sigma);
			\path[->] (sigma) edge node[above] {$b$} (nsigma);
		\end{tikzpicture}
	\end{subfigure}
	\caption{The graph $G$ associated to the right \rauzy-induction.}
	\label{fig:graphG}
\end{figure}

We now prove \cref{thm:acceleration}, showing at the same time that the Rauzy induction remains in the specific cases of \cref{def:rrauzy}.
\begin{proof}[Proof of \cref{thm:acceleration}]
	We assume, without loss of generality, that $I = [0,1)$.
	Both the \rauzy-induction and the \zorich-induction associate to the initial ITM its first return map on a subinterval.
	Let $[0,s_r)$ the interval corresponding to the \rauzy-induction and $[0,s_z)$ the one corresponding to the \zorich-induction.
	We denote by $[0, s_r^{(n)})$ the associated subinterval after $n$ iterations of the \rauzy-induction where the right Rauzy induction is applied.
	It is clear from the definitions that $s_z \le s_r$.
	We show in what follows that for some $n \geq 1$, $s_z = s_r^{(n)}$.

	Let us denote with $J^1, J^2, J^3$ the intervals in the domain of the $3$-ITM and with $J_1, J_2, J_3$ their images.
	Similarly, we denote $\{I^\alpha\}_{\alpha\in\alphabet}$ the intervals obtained after the splitting procedure which produces an ITM permutation from the original $3$-ITM and $\{I_\alpha\}_{\alpha\in\alphabet}$ their images.
	Since no singularity belongs to the gap, $J_3=I_\alpha$, for some $\alpha\in\alphabet$, i.e., $J_3$ is not subdivided when we represent the $3$-ITM as an ITM permutation on $5$ intervals.
	Hence, the bottom word in the ITM permutation ends by $\underline \gamma \cdot \alpha$ with $\alpha,\gamma \in \alphabet$.

	We start by considering the cases in which the bottom interval wins in the \zorich-induction: $|J_1|>|J^3|$, which corresponds to cases labeled by b in \cref{fig:z-induction,fig:z-induction_rot}.
	In this case, either $J^3=I^\alpha\cup I^\beta$ or $J^3=I^\alpha\cup I^\beta \cup I^\gamma$, for some $\alpha, \beta,\gamma\in\alphabet$.
	Thus the \zorich-induction corresponds to two or three step of the \rauzy-induction.

	We now consider the case in which the top interval wins in the \zorich-induction: $|J^3|>|J_1|$.
	In these cases, $s_z$ is the last image of a singularity before the gap.
	Thus we only need to show that the gap will lose after a finite number of steps of the \rauzy-induction.
	We have
	\[
		J^3=I^\alpha \cup I^\gamma \cup I^\beta, \quad J^3=I^\gamma \cup I^\alpha \quad \text{or} \quad J^3=I^\alpha \cup I^\gamma
	\]
	and the rightmost bottom interval $J_1$ together with the gap is equal to $I_{\underline \gamma} \cup I_\eta$ where $\gamma$ is the label of the gap and the labels $\alpha, \beta, \gamma$, and $\eta$ are distinct.
	Moreover, the left singularity of the interval $J^3$ has to be to the left of $s_z$, which implies that the left singularity of the corresponding interval $I^\alpha$ or $I^\gamma$ is left of $s_z$.

	In the \rauzy-induction, any of the labels $\alpha, \beta, \gamma$, and $\eta$ can win.
	However, since the corresponding interval in the image are all to the left of $s_z$ this will not increase the number of singularities in $(s_z,1)$.
	In this setting, since we are in the first two cases of \cref{def:rrauzy}, the \rauzy-induction is always defined and does not stop since we never end up with the gap interval winning.
	In fact, the first case of the induction will keep the gap interval with at most one interval on its right, and the induction remains well defined in the second case.
	After a finite number of cases where the winner appears only once in each word and possibly one case where it appears twice, the bottom interval will be the gap.
	As noticed previously, by assumption, the interval on the top word will still be larger than the bottom one.
	Thus the gap will lose and at this step $s_z = s_r^{(n)}$ as we wanted.
\end{proof}

It follows from the previous result that the labeled graph $G$ in \cref{fig:graphG} is well defined.

\begin{lemma}\label{lemma:win}
	For any infinite sequence of well defined left and right Rauzy inductions on an ITM, every letter wins and loses infinitely many times.
\end{lemma}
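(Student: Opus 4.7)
The plan is to argue by contradiction, exploiting the monotonicity of the length vector under the \rauzy-induction together with a finite analysis of the Rauzy graph $G$ of \cref{fig:graphG}. By \cref{def:rrauzy}, at each step with winner $a$ and loser $b$ the lengths transform as $\hat{\lambda}_a = \lambda_a - \lambda_b$, $\hat{\lambda}_b = \lambda_b$, and the remaining two lengths are unchanged. In particular each $\lambda_\alpha$ is non-increasing along the orbit, and the interval length satisfies $|I^{(n+1)}| = |I^{(n)}| - \lambda_{b^{(n)}}^{(n)}$, where $b^{(n)}$ denotes the loser at step $n$. Telescoping yields
\[
\sum_{n \geq 0} \lambda_{b^{(n)}}^{(n)} \leq |I^{(0)}| < \infty,
\]
so in particular $\lambda_{b^{(n)}}^{(n)} \to 0$ as $n \to \infty$.

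Suppose first, for contradiction, that some letter $\alpha$ wins only finitely many times. Then from some step $N$ onward $\lambda_\alpha \equiv c > 0$. Since $\lambda_{b^{(n)}}^{(n)} \to 0$, the letter $\alpha$ can also be the loser only finitely many times, because each instance of $\alpha$ as loser would contribute exactly $c$ to the sum above. Hence from some later step $N'$, $\alpha$ sits only in non-extremal positions of both $w_0$ and $w_1$. A finite case analysis of $G$, tracking how the substitution $s\colon a \mapsto a\,b$ moves letters toward the extremities of the non-winning row, shows that along any infinite trajectory in $G$ every letter of $\alphabet$ is brought to an extremal position of $w_0$ or $w_1$ within finitely many Rauzy moves. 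Applied to $\alpha$ at a step $n \geq N'$ large enough that the opposite extremal letter has length below $c$, the winner-is-longer rule of \cref{def:rrauzy} forces $\alpha$ to be the winner, contradicting the hypothesis.

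For the dual assertion, suppose $\alpha$ loses only finitely many times; by what we have just shown, $\alpha$ still wins infinitely often, so $\lambda_\alpha$ strictly decreases infinitely often and tends to some limit $a_\alpha \geq 0$. By the combinatorial claim above, $\alpha$ is extremal at infinitely many steps, and by the contradiction hypothesis $\alpha$ wins at each of them beyond some step $N$. Applying the first half to every other letter $\gamma \neq \alpha$, each $\lambda_\gamma$ also strictly decreases infinitely often. Combined once more with the combinatorial structure of $G$, one finds infinitely many steps $n \geq N$ at which $\alpha$ is extremal opposite a letter $\gamma$ with $\lambda_\gamma^{(n)} \geq \lambda_\alpha^{(n)}$ (otherwise at least one of the other letters could never be exposed at the extremities for long enough to win, contradicting what the first part gives for $\gamma$). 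At such a step, $\alpha$ must be the loser, contradicting the hypothesis.

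The main obstacle in this approach is the combinatorial lemma appealed to in both halves: that along every infinite path in $G$, each letter of $\alphabet$ occupies an extremal position of the ITM permutation infinitely often. This reduces to a finite verification on the three vertex types of $G$ depicted in \cref{fig:graphG}, using the explicit action of the substitution $s\colon a \mapsto a\,b$ to check that previously interior letters are successively exposed at the boundary of $w_0$ and $w_1$ as the induction proceeds.
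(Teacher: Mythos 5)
Your opening length bookkeeping (each $\lambda_\alpha$ is non-increasing, the telescoping bound $\sum_n \lambda^{(n)}_{b^{(n)}}\le |I^{(0)}|$, and hence the implication that a letter which stops winning must also stop losing) is correct and coincides with the first observation of the paper's proof. The gap is the ``combinatorial lemma'' on which both halves of your argument rest: that along \emph{every} infinite path in the graph $G$ of \cref{fig:graphG} each letter of $\alphabet$ is brought to the active extremal position within finitely many moves. You defer this to ``a finite verification on the three vertex types'', but no such verification can succeed, because the statement is false for abstract infinite paths in $G$. For example, at a vertex of the first type with $w_0=v_0\,a$ and $w_1=u\,a\,b$, the move in which $a$ wins replaces $w_1$ by $s(u\,a)=u\,a\,b$, i.e.\ it is a self-loop; iterating it forever is a legitimate infinite path of $G$ along which only $a$ and $b$ are ever compared and the other letters never reach the active end. (The same phenomenon is visible in the $t_0$ and $t_2$ self-loops of \cref{fig:zdiagram}.) Such paths are excluded only by the lengths --- here $\lambda_a$ would have to decrease by the fixed positive amount $\lambda_b$ infinitely often --- so the exposure claim cannot be separated from the metric data, and the reduction to a finite graph check does not close the argument. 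A secondary imprecision: ``neither wins nor loses after step $N'$'' does not imply ``non-extremal''; a letter may sit at the left end forever while only right inductions are performed, so what you actually need is that $\alpha$ is one of the two \emph{compared} letters infinitely often, which is an even stronger exposure statement.

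This missing step is exactly where the paper does different, and essentially unavoidable, work mixing lengths with combinatorics. It shows that the set $\cL^c$ of eventually inert letters must form a single contiguous subword of each of $w_0$ and $w_1$: a block $s$ of infinitely-losing letters trapped between two $\cL^c$-blocks can never lose (losses happen only at the boundary), hence its letters must eventually win, and each such win appends the loser to $s$, forcing $|s|$ to grow past the size of the alphabet. Letters that win infinitely often but never lose are then excluded by the decomposition $r\cdot\alpha\cdot s$ of the opposite word ($|r|$ non-increasing while $\lambda_\alpha$ keeps shrinking), and the final contradiction comes from the invariance of the central $\cL^c$-block together with the vanishing of all other lengths. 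Your second half suffers from the same defect in amplified form: ``otherwise at least one of the other letters could never be exposed at the extremities for long enough to win'' is an assertion, not an argument, and would need to be replaced by something like the paper's $r\cdot\alpha\cdot s$ analysis. As written, the proposal establishes only the preliminary length estimates and leaves the core of the lemma unproved.
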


\begin{proof}
	We begin by observing that if a letter does not win infinitely many times, then the length of the corresponding interval is bounded from below.
	Thus, it can lose only a finite number of times.
	Up to applying left and right Rauzy induction a finite number of times, we can assume that there is a set $\cL$ of letters that lose infinitely many times and such that no other letter neither wins or loses.
	We denote its complementary set by $\cL^c$.

	We claim that all the letters in $w_0$ and $w_1$ belonging to $\cL^c$ form a single subword.
	In other words, for any $\epsilon \in \{0,1\}$ there exists a decomposition $w_\epsilon = r \cdot v \cdot t$ such that $v$ is a word in alphabet $\cL^c$ and $r,t$ are in $\cL$.
	Notice that the words $r$ and $t$ may be empty.

	To show our claim, assume that there is a decomposition $r \cdot v_1 \cdot s \cdot v_2 \cdot t$ such that $v_1, v_2$ are words with letters in $\cL^c$ and $r,s,t$ are words with letters in $\cL$, and $|s| > 0$.
	Since the intervals around $s$ never lose, the intervals in $s$ also will not lose.
	Hence, the number of letters in $s$ is not decreasing.
	By definition of $\cL$, the letters of $s$ have to win eventually.
	Whenever this happens, the losing letter is added to to $s$, thus increasing the length of $s$ itself.
	Since there number of letters is finite and the length of $s$ keeps increasing, we have a contradiction.

	We now show that all letters winning infinitely many times also lose infinitely many times.
	Indeed, imagine that a letter $\alpha$ wins infinitely many times but never loses.
	Hence either $I^\alpha$ or $I_\alpha$ are at the beginning or end of the interval $I$.
	By symmetry of the problem, we can assume that $\alpha$
	will remain in the first position of $w_0$ or $w_1$.
	The other word can be decomposed in $r \cdot \alpha \cdot s$, where $r$ is not the empty word.
	By assumption, the letters in $r$ always lose in the left \rauzy-induction	against $\alpha$.
	Thus, the length of the word $r$ is non-increasing.
	Since $\alpha$ is winning, the corresponding interval is getting shorter.
	Eventually, it will be shorter that the length of the interval labeled by the first letter of $r$ and hence lose to this same interval in the left \rauzy-induction, which is a contradiction.

	To sum up, we have two words on top and bottom with a central part formed by the letters that never win.
	This central part of $w_0$ and $w_1$ is invariant by the left and right Rauzy induction.
	As all the other letters lose infinitely many	times, the lengths of the corresponding intervals go to zero.
	Since these central parts have exactly the same position in the top and bottom intervals, the left and right Rauzy induction stops after a finite number of times, contradicting our assumption.
\end{proof}

\subsection{Simplicial systems}\label{sec:simplicial}
In this section we give, following~\cite{Fougeron20}, the definition of simplicial system.

Let $G=(V,E)$ a graph labeled on an alphabet $\alphabet$ by a function $l\colon E\to\alphabet$.
We require that, for all $v\in V$, the restriction of $l$ to the edges starting at $v$ in injective.
Let $\|\cdot\|$ be the norm on $\RR_+^\alphabet$ given by $\|\lambda\|=\sum_{\alpha\in\alphabet}\lambda_\alpha$.
Consider the simplex of dimension $|\alphabet|-1$ defined by $\Delta=\{\lambda\in\RR_+^\alphabet : \|\lambda\|=1\}$.

Given a vertex $v\in V$ let $v_{\text{out}}$ be the set of all edges going out of $v$ in $G$.
We define a partition of $\Delta$ whose atoms, for an edge $e\in E$, are given by
\[
	\Delta^e=\{ \lambda\in\Delta : \lambda_{l(e)} < \lambda_\alpha, \text{ for all } \alpha\in l(v_{\text{out}}) \text{ and } \alpha\neq l(e) \}.
\]
To the edge $e$ we associate the matrix
\[
	M_e = \operatorname{Id} + \sum_{\substack{\alpha\in l(v_{\text{out}}) \\ \alpha\neq e}} E_{\alpha,l(e)},
\]
where $E_{a,b}$ is the elementary matrix with coefficient $1$ at the row $a$ and column $b$ and all the other coefficients equal to $0$.

Define $\Delta^G=V\times\Delta$ and define the map $T\colon\Delta^G\to\Delta^G$ as follows.
For all $\lambda\in\Delta^e$ with the edge $e$ starting at $v$ and ending at $v'$, we say
\[
	T(v,\lambda)=(v', T_e(\lambda)),
\]
with
\[
	\begin{split}
		T_e\colon\Delta^e &\to\Delta\\
			\lambda &\mapsto \frac{M_e^{-1}\lambda}{\|M_e^{-1}\lambda\|}.
	\end{split}
\]
We say that $G$ is a \emph{simplicial system} and the map $T$ is its associated \emph{win-lose induction}.

From these definitions it is clear that the projectivization of the right \rauzy-induction is the win-lose induction associated to the simplicial system defined by the graph $G$ in \cref{fig:graphG}.

\subsection{Proof of the main results}
In order to prove our main results we need another definition from~\cite{Fougeron20}.

Given a simplicial system $G$ and $\cL\subset\alphabet$, we denote by $G_\cL$ the subgraph of $G$ with the same set of vertices $V$ and whose edges are as follows.
For a vertex $v\in V$:
\begin{itemize}
	\item if at least one outgoing edge from $v$ is labeled in $\cL$ then the set of outgoing edges in $G_\cL$ is
	\[
		v_\text{out}^\cL = \{e\in v_{\text{out}} : l(e)\in\cL\};
	\]
	\item otherwise $v_{\text{out}}^\cL=v_{\text{out}}$.
\end{itemize}

\begin{definition}[Strongly non-degenerating]
	\label{def:deg}
	We say that a simplicial system is \emph{strongly non-degenerating} if
	\begin{enumerate}
		\item for all vertices every letter wins and loses in almost every path with respect to Lebesgue measure,
		\item for all $\emptyset\subsetneq\cL\subsetneq\alphabet$ and all vertex $v$ in a strongly connected components $\mathscr{C}$ of $G_\cL$ then either
			\begin{itemize}
				\item $|l(v_\text{out})\cap\cL|\leq 1$;
				\item or there is a path from $v$ in $G$, labeled in $\cL$, which leaves $\mathscr{C}$.
			\end{itemize}
	\end{enumerate}
\end{definition}


We are now ready to prove our main technical statement: we have to check that the simplicial system $G$ that we have defined above is quickly escaping.
The formal definition of \emph{quick escaping property} for a simplicial system was given in~\cite{Fougeron20} (see Section 3.1).
This property is a version of bounded distortion property: morally, it means that trajectories will go out of degenerate subgraphs in a small time with high probability.
Since stating formally the quick escaping property would require us to introduce a lot of the machinery from~\cite{Fougeron20}, we will refrain from doing so.

We will use the following result

\begin{theorem}[Theorems 3.13 of~\cite{Fougeron20}]\label{thm:nondegenerating}
	A strongly non-degenerating simplicial system is quickly escaping.
\end{theorem}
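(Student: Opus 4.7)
The plan is to translate the qualitative content of strong non-degeneracy into a quantitative decay estimate for the measure of points whose orbits stay inside a degenerate subgraph. Fix a proper subset $\emptyset\subsetneq\cL\subsetneq\alphabet$ and, for each $n\ge 1$, consider the set
\[
	E_\cL^n = \bigl\{(v,\lambda)\in\Delta^G : \text{the first } n \text{ edges traversed by } (v,\lambda) \text{ under } T \text{ are labeled in } \cL\bigr\}.
\]
The quick escape property amounts to showing that the Lebesgue measure of $E_\cL^n$ decays quickly in $n$, uniformly over choices of $\cL$. My approach would reduce this to a uniform one-step (or $N$-step) lower bound $\operatorname{Leb}\bigl(E_\cL^{N}\mid v\bigr) \le 1-\kappa$, from which iteration yields geometric decay $\operatorname{Leb}(E_\cL^{kN}) \le C(1-\kappa)^k$.

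To produce such a uniform bound, I would analyze the finite graph $G_\cL$ through its strongly connected component decomposition. Condition (2) of \cref{def:deg} provides a dichotomy on each SCC $\mathscr{C}$: either some vertex $v\in\mathscr{C}$ has at most one outgoing $\cL$-edge, in which case the remaining outgoing edges carry uniformly positive Lebesgue mass on $\Delta$ and thus provide an immediate escape of positive probability, or else there is a labeled path from $\mathscr{C}$ leaving the component through $\cL$-edges. In the second case I would chain the single-step escape estimates along this path, using the fact that the simplex restricted to a finite word of $\cL$-edges is a positive-measure subsimplex, to obtain a uniform lower bound on the probability of exiting $\mathscr{C}$ within $|V|$ steps. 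Since $G$ has finitely many subgraphs $G_\cL$ and each has finitely many SCCs, taking a minimum yields $\kappa$ and $N$ depending only on $G$.

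The main obstacle is bounded distortion: the measure-theoretic step above compares the Lebesgue measure on $\Delta^e$ with its pushforward under the projective map $T_e$, and repeated composition can in principle cause the Radon--Nikodym derivatives to blow up near the boundary of the simplex. To control this, I would establish uniform distortion bounds for finite products $M_{e_1}^{-1}\cdots M_{e_k}^{-1}$ along admissible paths in $G_\cL$, using the concrete form $M_e = \operatorname{Id} + \sum_{\alpha} E_{\alpha,l(e)}$ to show that the matrix coefficients grow at a controlled rate, and then invoking condition (1) of \cref{def:deg} to rule out pathological accumulation on faces of $\Delta$ corresponding to edges that never win. With bounded distortion in hand, the probabilistic lower bound on escape translates into a Lebesgue lower bound on $\Delta$, completing the argument.

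In short, my plan is: SCC analysis of $G_\cL$ plus the dichotomy of condition (2) to produce a combinatorial escape route, bounded distortion of the projective matrix action to turn a combinatorial lower bound into a measure-theoretic one, and finally iteration to pass from a single-window estimate to exponential decay of $\operatorname{Leb}(E_\cL^n)$. The delicate step is the distortion control, since the simplicial system need not be uniformly hyperbolic and all hyperbolic-like estimates must be extracted from the structural hypotheses of \cref{def:deg}.
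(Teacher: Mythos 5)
This statement is quoted verbatim from~\cite{Fougeron20} (Theorem 3.13 there) and the paper under review gives no proof of it --- it does not even state the quick escape property formally, explicitly declining to do so. So there is no internal proof to compare your proposal against; I can only assess the proposal on its own terms, against what the actual argument in~\cite{Fougeron20} has to accomplish.

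Your overall architecture (SCC decomposition of $G_\cL$, the dichotomy from condition (2) of \cref{def:deg} producing an escape route of bounded combinatorial length, then converting a combinatorial escape into a measure estimate) is the right skeleton. The genuine gap is the step you yourself flag as delicate: you propose to \emph{establish} uniform bounded distortion for the products $M_{e_1}^{-1}\cdots M_{e_k}^{-1}$ along admissible paths in $G_\cL$, and then iterate a one-window estimate to get geometric decay $C(1-\kappa)^k$. Such uniform distortion bounds are false in this setting. The matrices $M_e=\operatorname{Id}+\sum_\alpha E_{\alpha,l(e)}$ are unipotent, and a loop in $G_\cL$ along which a fixed letter keeps losing behaves like iterating $\left(\begin{smallmatrix}1&1\\0&1\end{smallmatrix}\right)$ in the Farey algorithm: the Jacobian of the induced projective map degenerates near a face of $\Delta$, and no $N$-step window has a distortion constant uniform over the cylinder in which the window starts. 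The entire point of the quick escape property in~\cite{Fougeron20} is to serve as a \emph{substitute} for bounded distortion, which these slow, non-uniformly-hyperbolic algorithms lack; assuming it as an intermediate step makes the argument circular. Condition (1) of \cref{def:deg} is an almost-everywhere statement and cannot by itself upgrade to the uniform quantitative control you need.

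The route that actually works avoids distortion of Radon--Nikodym derivatives altogether: one estimates the Lebesgue measure of the union of cylinders $M_\gamma\Delta$ over all length-$n$ paths $\gamma$ confined to $G_\cL$ directly, using the exact volume formula $\operatorname{Leb}(M\Delta)=\operatorname{Leb}(\Delta)\cdot\det M\cdot\prod_{\alpha\in\alphabet}\|M e_\alpha\|^{-1}$ for a nonnegative matrix $M$ with column vectors $Me_\alpha$, combined with a counting argument showing that the non-degeneracy hypotheses force enough columns to grow along any confined path. The resulting decay is only polynomial (summable), not geometric --- another sign that a genuinely uniformly expanding mechanism is not available. So your proposal identifies the correct combinatorial input but routes the analytic step through a mechanism that fails; it would need to be rebuilt around direct volume estimates of cylinders rather than distortion control.
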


Let $F$ be the subgraph of $G$ obtained by removing the vertices corresponding to rotations and the edges pointing to them.

\begin{lemma}\label{quicklyescaping}
	The simplicial system associated to $F$ is quickly escaping.
\end{lemma}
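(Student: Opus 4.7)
My plan is to invoke \cref{thm:nondegenerating} and therefore reduce the lemma to showing that the simplicial system $F$ is strongly non-degenerating, i.e.\ to verifying the two axioms of \cref{def:deg} for $F$.

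The first axiom is essentially \cref{lemma:win} in disguise. The vertices of $F$ are exactly the ITM-permutations at which the Rauzy induction is always well defined (the rotation vertices having been excised), and the label of an outgoing edge records which letter loses at the corresponding step. Thus the claim that every letter wins and loses infinitely often along Lebesgue-almost every path of $F$ is a direct translation of the conclusion of \cref{lemma:win}, once one projectivises the length vectors and observes that in the win--lose induction the winning and losing letters are determined by the edge used. Writing this translation down carefully takes a short paragraph.

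The substantive work is the second axiom, which I intend to verify by direct inspection of the finitely many cases. Since $|\alphabet|=4$, there are only $14$ proper nonempty subsets $\cL\subsetneq\alphabet$. The vertex set of $F$ is likewise finite and small: each vertex is an ITM-permutation on five positions over a $4$-letter alphabet with one letter repeated in $w_0$ and a single gap in $w_1$, and the local edge structure at each vertex is given explicitly by the substitutions of \cref{def:rrauzy} together with \cref{fig:graphG}. For each pair $(\cL,v)$ with $v$ in a strongly connected component $\mathscr{C}$ of $F_\cL$, I would check the dichotomy required by \cref{def:deg}\,(2): either $|l(v_\mathrm{out})\cap\cL|\le 1$, or else there is an $\cL$-labelled path in $F$ starting at $v$ that leaves $\mathscr{C}$.

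The main obstacle is the bookkeeping of this case analysis, but I expect it to succeed for the following conceptual reason, which also tells me what escape paths to look for in each case: if both bullets of \cref{def:deg}\,(2) failed for some $(\cL,\mathscr{C})$, then a positive Lebesgue-measure set of paths in $F$ would enter $\mathscr{C}$ and stay there forever, and on such paths no letter outside $\cL$ could ever lose again, contradicting \cref{lemma:win}. This heuristic is not yet a proof, because the dichotomy of \cref{def:deg}\,(2) is a structural condition on $F_\cL$ inside $F$ rather than a measure-theoretic one, but it guarantees that the required escape paths exist and guides the inspection. Once both axioms are verified, \cref{thm:nondegenerating} yields the quick escaping property of $F$.
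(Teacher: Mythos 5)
Your frame is the same as the paper's: reduce to strong non-degeneracy via \cref{thm:nondegenerating}, and get axiom (1) of \cref{def:deg} from \cref{lemma:win}. The genuine gap is axiom (2), which is where all the content of the lemma lies: you announce a finite case inspection over all pairs $(\cL,v)$ but do not perform it, and the heuristic you substitute for it is, as you yourself concede, not a proof. Nor does it upgrade cheaply: failure of both bullets at one vertex $v$ does not by itself yield an infinite path trapped in $\mathscr{C}$ with all labels in $\cL$ (other vertices of $\mathscr{C}$ may have every outgoing edge labeled in $\cL^c$, in which case $F_\cL$ keeps all of them), and one would further have to argue that such a combinatorial path is realized by an actual length vector before \cref{lemma:win} could be contradicted. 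Even the setup of the inspection is nontrivial, since the vertex set of $F$ (the full Rauzy diagram of the \rauzy-induction on split ITMs) is never enumerated.

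The paper avoids the enumeration entirely with a structural argument that your proposal does not contain. It organizes the cases by how many extremities of $I$ carry an interval labeled in $\cL^c$, using that a winning interval keeps its position at an end of $I$ and that in $F_\cL$ a letter of $\cL$ always loses to a letter of $\cL^c$, so extremal $\cL^c$-intervals are frozen within a strongly connected component. When no extremal interval is labeled in $\cL^c$, axiom (1) supplies a finite path whose last label only is in $\cL^c$, giving the escape required by the second bullet. The remaining mixed case (one extremity in $\cL^c$ but both compared letters in $\cL$) is handled by a ``big loop'': via \cref{thm:acceleration} and the geometry of the \zorich-induction, there is a loop along which one letter wins against every other, which lets a letter of $\cL^c$ appear at the other extremity and moves the path into a component already treated. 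These are the specific mechanisms your inspection would have to discover; without them (or the completed brute-force check) the proof is incomplete.
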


\begin{proof}
By \cref{thm:nondegenerating} it is enough to check that $F$ is strongly non-degenerating.
The first part of \cref{def:deg} is proved in \cref{lemma:win}.
Thus we need to prove only the second part.

We remark that in the Rauzy induction a winning interval always keeps its position at one end of the interval $I$ where the ITM acts.
Let $\cL$ be a subset of letters as in \cref{def:deg}.
In $F_\cL$, by definition, a letter in $\cL$ always lose against any letter in the complementary set $\cL^c$.
Hence if an interval at the left or right extremity of $I$ is labeled in $\cL^c$ it will remain in the same position.
In particular if there is at least one interval labeled in $\cL$ on each side of $I$ then each point of the corresponding strongly connected component of $F_\cL$ satisfies the first case of property (2).
This also show that the number of extremities with at least on interval labeled in $\cL^c$ is constant in a strongly connected component of $F_\cL$.

If none of the extremal intervals of $I$ is labeled in $\cL^c$ by the first part of \cref{def:deg} there exists a finite path with only the last label in $\cL^c$.
Hence there exists a path labeled in $\cL$ that leaves a connected component with no extremal interval labeled in $\cL^c$.
This shows that the second part of (2) holds.

Finally, we deal with the case where there is only one extremal interval labeled in $\cL^c$ but the induction is comparing two interval labeled in $\cL$.
Notice that for the \zorich-induction, the interval that has no overlap in the image is always at the opposite position of its preimage.
Namely, if it is at the extreme right its preimage is at the extreme left and vice versa.
By \cref{thm:acceleration}, the same is true after a finite number of step for the split ITM, creating a \textit{big loop} for the Rauzy induction.
More precisely, there is a loop along which this letter wins against every other letter in the ITM.
This implies that one can make any letter in $\cL^c$ appear as the label of an extremal interval on this side, hence going to a strongly connected component with two sides having an interval labeled in $\cL^c$.
Since we have already taken care of this case, we have completed the proof.
\end{proof}

The set of lengths vectors for which the right \rauzy-induction does not stop in finite time starting from a given vertex of the simplicial system form a fractal subset (denoted by $\Delta(F)$ in \cite{Fougeron20}) of the simplex set.
Theorem 1.5 of~\cite{Fougeron20} then immediately implies the following.

\begin{corollary}
	The set of lengths vectors for which the right \rauzy-induction does not stop in finite time on $3$-ITMs is of Hausdorff dimension strictly smaller than $3$.
\end{corollary}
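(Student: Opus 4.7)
The plan is to identify the set named in the statement with the natural fractal set $\Delta(F)$ defined by Fougeron's formalism for the subgraph $F$, and then invoke Fougeron's Hausdorff dimension theorem directly. The Rauzy induction described in Figure~\ref{fig:graphG} has three types of target vertices: two combinatorial vertices in $F$, plus a terminal \emph{rotation} vertex (the third case of \cref{def:rrauzy}, where the induction is declared to stop). Hence the right \rauzy-induction applied to an initial length vector $\lambda\in\Delta$ fails to stop in finite time if and only if the orbit of $\lambda$ under the associated win-lose induction never visits a rotation vertex, equivalently, it stays in $F$ forever. That set is, by the very definition in Section~2.4 of~\cite{Fougeron20}, the fractal $\Delta(F)\subset V(F)\times\Delta$ carved out of the finite union of simplices indexed by the vertices of $F$.

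Once this identification is made, the bound is a direct citation. By \cref{quicklyescaping} the simplicial system $F$ is quickly escaping, so Theorem~1.5 of~\cite{Fougeron20} applies and asserts
\[
\dim_{\mathrm H}\bigl(\Delta(F)\bigr)<\dim(\Delta).
\]
Since the alphabet $\alphabet$ from our splitting procedure has exactly four letters, the ambient simplex $\Delta$ has dimension $3$, yielding the desired strict inequality $\dim_{\mathrm H}\bigl(\Delta(F)\bigr)<3$. As $\Delta(F)$ is a finite disjoint union indexed by $V(F)$, the Hausdorff dimension of the full bad set is the maximum over vertices, which is still strictly less than $3$.

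The only minor point I would verify carefully is the correspondence between the two stopping regimes of the induction and the combinatorial picture: namely, that whenever the Rauzy induction stops on a length vector $\lambda$ because two competing intervals have equal length (a measure-zero event), this still corresponds to $\lambda$ lying on a boundary face shared by two atoms $\Delta^e$, so these vectors are already excluded by the full-measure definition of $\Delta(F)$ and cause no problem. After that routine check the argument is essentially mechanical.

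The main obstacle in the proof of this corollary is therefore not the corollary itself but the hypothesis it depends on: all the nontrivial work has been absorbed into \cref{quicklyescaping} (strong non-degeneracy of $F$, via the combinatorial analysis of the graph plus \cref{lemma:win} and the acceleration relation \cref{thm:acceleration}) and into the black-box input Theorem~1.5 of~\cite{Fougeron20}. Once those are in hand, the proof of the corollary is a one-line appeal.
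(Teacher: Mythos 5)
Your proposal is correct and matches the paper's own (essentially one-line) argument: identify the non-stopping set with the fractal $\Delta(F)$ of the quickly escaping simplicial system $F$ (via \cref{quicklyescaping}) and apply Theorem~1.5 of~\cite{Fougeron20}, with the dimension bound $3$ coming from the $4$-letter alphabet. The extra remark about equal-length (boundary) configurations is a reasonable sanity check but is not needed beyond what the paper already does.
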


This corollary is equivalent to \cref{main}.
Finally we prove our result about uniquely ergodic double rotation, \cref{unique}.

\begin{proof}[Proof of \cref{unique}]
	Theorem 4.24 of~\cite{Fougeron20} guarantees the existence of a measure $\mu$ which is invariant under the right \rauzy-induction and which induces the unique measure of maximal entropy on the canonical suspension.
	As explained in Section~4.1 of~\cite{Fougeron20}, the quickly escaping property enables to construct a uniformly expanding acceleration of the induction.
	Hence, the set of parameters which follow the same path, generic for $\mu$, is a single point in $X$.
	As in Theorem~5 of~\cite{BruinClack12}, we can now apply the criterion in Lemma 17 of~\cite{BruinTroubetzkoy03} to prove that almost every double rotation w.r.t.\ $\mu$ is uniquely ergodic.
\end{proof}

\section{Open questions}
In the current section we briefly discuss several open questions on double rotations and their generalizations that we consider challenging and interesting.

\subsection{Geometrical interpretation}
It is known that in the case of interval exchange transformations (IETs) there exists a duality between measured foliations on surfaces and IETs: every measured foliation identifies a family of IETs as the first return maps on some transversal, and for any IET there is a canonical way to construct a surface with a measured foliation on it such that the starting IET is a first return map on some transversal of the measured foliation we constructed.
This procedure is called \emph{suspension} and was described by W.~Veech in~\cite{Veech82}.
Suspensions became an efficient tool to understand ergodic properties of IETs and have also been widely used in study of geometry of moduli spaces (see, for example, \cite{Yoccoz10}).

Another close relative of double rotations (and, more generally, ITMs) is called \emph{systems of isometries} (see~\cite{GaboriauLevittPaulin94}).
Systems of isometries generalize the idea of ITMs in the following sense: we consider a finite set of identifications of subintervals of a given interval (or even multi-interval) via orientation-preserving isometries.
Systems of isometries appeared in geometric group theory as a tool to describe the action of a free group on $\mathbb R$-tree, see~\cite{GaboriauLevittPaulin94}.
There is a canonical way to define a suspension for any system of isometry (the corresponding object is also known as \emph{band complex}) and there exists a way to associate this foliated band complex with a measured foliation on a handle body (see~\cite{DynnikovHubertSkripchenko20}).

It would be interesting and useful to construct a geometrically meaningful suspension for double rotations (or for a wider class of ITMs).

Another problem that is important to solve in this context is the question of the existence of a ``natural'' continuous flow such that the suspension flow based on our induction procedure is its discretization, in a way analogue of Teichm\"uller flow and Veech flow for interval exchange transformations.
If there is a way to find such a flow, one could also think about the associated moduli spaces and their geometry.

\subsection{Connection with billiards}
It was shown in~\cite{SkripchenkoTroubetzkoy15} (using ideas from~\cite{BoshernitzanKornfeld95}) that double rotations can be considered as the first return map of the billiard flow for the so-called \emph{billiards with spy mirrors}.
This implies the following natural question, partly related to the problem of geometrical interpretation described above: is there a way to estimate the diffusion rate of the billiard trajectory in terms of the Lyapunov spectrum of the induction cocycle?
In such a case it would be interesting to investigate some properties of the Lyapunov spectra, such as simplicity and/or the Pisot property.

\subsection{Ergodic properties}
Once we know that the typical double rotation is uniquely ergodic, it is natural to ask about stronger chaotic properties, such as weak and strong mixing.
It is announced in a fresh preprint by S.~Troubetzkoy in ~\cite{Troubetzkoy21} that all double rotations admit non weakly-mixing invariant measure.
Thus, since almost all double rotations of infinite type are uniquely ergodic, almost all of them are not weakly mixing.
Hence a more challenging question is to find double rotations with exceptional behavior.
Namely, those that are minimal and of infinite type and are weakly-mixing.

For general ITMs, even the question about ergodicity and uniqueness of invariant measure remains open.

\subsection{Questions of complexity and combinatorial properties}
Some study of the complexity in connection with double rotations was performed in~\cite{SkripchenkoTroubetzkoy15} and~\cite{CassaigneNicolas10}.
More in detail, in~\cite{CassaigneNicolas10} it was shown that the subword complexity of the subshift associated with a special subclass of double rotations that corresponds to a subspace of codimension 1 is linear: $n+1 \le p(n)\le 3n$.
In~\cite{SkripchenkoTroubetzkoy15} some estimations on the complexity of the billiard orbits associated with double rotations were obtained.
These estimations are also linear and yield the same bounds as the above.
It is important to get some information about the actual complexity for the most generic class of double rotations of infinite type as well as for wider classes of ITMs.

\printbibliography

@article {BoshernitzanKornfeld95, AUTHOR = {Boshernitzan, M. and Kornfeld, I.}, TITLE = {Interval translation mappings}, JOURNAL = {Ergodic Theory Dynam. Systems}, FJOURNAL = {Ergodic Theory and Dynamical Systems}, VOLUME = {15}, YEAR = {1995}, NUMBER = {5}, PAGES = {821--832}, ISSN = {0143-3857}, MRCLASS = {58F03 (58F11)}, MRNUMBER = {1356616}, MRREVIEWER = {Jos\'{e} Miguel Moreno}, DOI = {10.1017/S0143385700009652}, URL = {https://doi.org/10.1017/S0143385700009652}, }

@article {Bruin07, AUTHOR = {Bruin, H.}, TITLE = {Renormalization in a class of interval translation maps of {$d$} branches}, JOURNAL = {Dyn. Syst.}, FJOURNAL = {Dynamical Systems. An International Journal}, VOLUME = {22}, YEAR = {2007}, NUMBER = {1}, PAGES = {11--24}, ISSN = {1468-9367}, MRCLASS = {37E20 (37E05 37E10)}, MRNUMBER = {2308208}, MRREVIEWER = {J\'{e}r\^{o}me Buzzi}, DOI = {10.1080/14689360601028084}, URL = {https://doi.org/10.1080/14689360601028084},  shorthand = {B07},}

@article {BruinClack12, AUTHOR = {Bruin, H. and Clack, G.}, TITLE = {Inducing and unique ergodicity of double rotations}, JOURNAL = {Discrete Contin. Dyn. Syst.}, FJOURNAL = {Discrete and Continuous Dynamical Systems. Series A}, VOLUME = {32}, YEAR = {2012}, NUMBER = {12}, PAGES = {4133--4147}, ISSN = {1078-0947}, MRCLASS = {37B10 (37C70 37D25 37E05 37E10)}, MRNUMBER = {2966738}, MRREVIEWER = {David Ralston}, DOI = {10.3934/dcds.2012.32.4133}, URL = {https://doi.org/10.3934/dcds.2012.32.4133}, }

@article {BruinTroubetzkoy03, AUTHOR = {Bruin, H. and Troubetzkoy, S.}, TITLE = {The {G}auss map on a class of interval translation mappings}, JOURNAL = {Israel J. Math.}, FJOURNAL = {Israel Journal of Mathematics}, VOLUME = {137}, YEAR = {2003}, PAGES = {125--148}, ISSN = {0021-2172}, MRCLASS = {37A25 (37B99 37E10)}, MRNUMBER = {2013352}, MRREVIEWER = {Franco Vivaldi}, DOI = {10.1007/BF02785958}, URL = {https://doi.org/10.1007/BF02785958}, }

@article {BuzziHubert04, AUTHOR = {Buzzi, J. and Hubert, P.}, TITLE = {Piecewise monotone maps without periodic points: rigidity, measures and complexity}, JOURNAL = {Ergodic Theory Dynam. Systems}, FJOURNAL = {Ergodic Theory and Dynamical Systems}, VOLUME = {24}, YEAR = {2004}, NUMBER = {2}, PAGES = {383--405}, ISSN = {0143-3857}, MRCLASS = {37E05 (37A05 37B40 37C40)}, MRNUMBER = {2054049}, MRREVIEWER = {Peter Raith}, DOI = {10.1017/S0143385703000488}, URL = {https://doi.org/10.1017/S0143385703000488}, }

@incollection {CassaigneNicolas10, AUTHOR = {Cassaigne, J. and Nicolas, F.}, TITLE = {Factor complexity}, BOOKTITLE = {Combinatorics, automata and number theory}, SERIES = {Encyclopedia Math. Appl.}, VOLUME = {135}, PAGES = {163--247}, PUBLISHER = {Cambridge Univ. Press, Cambridge}, YEAR = {2010}, MRCLASS = {68R15 (37B15 68-01)}, MRNUMBER = {2759107}, MRREVIEWER = {Patrice S\'{e}\'{e}bold}, }

@article{DynnikovHubertSkripchenko20, AUTHOR = {Dynnikov, I. and Hubert, P. and Skripchenko, A.}, TITLE = {Dynamical systems around the Rauzy gasket and their ergodic properties}, eprint = {2011.15043v1}, eprinttype = {arxiv}, eprintclass = {math.DS}, YEAR = {2020}, Url = {http://arxiv.org/abs/2011.15043v1}, File = {2011.15043v1.pdf},}

@article{Fougeron20, AUTHOR = {Fougeron, C.}, TITLE = {Dynamical properties of simplicial systems and continued fraction algorithms}, eprint = {2001.01367}, eprinttype = {arxiv}, eprintclass = {math.DS}, YEAR = {2020}, Url = {http://arxiv.org/abs/2001.01367v1}, File = {2001.01367v1.pdf},  shorthand = {F20},}

@article {GaboriauLevittPaulin94, AUTHOR = {Gaboriau, D. and Levitt, G. and Paulin, F.}, TITLE = {Pseudogroups of isometries of {${\bf R}$} and {R}ips' theorem on free actions on {${\bf R}$}-trees}, JOURNAL = {Israel J. Math.}, FJOURNAL = {Israel Journal of Mathematics}, VOLUME = {87}, YEAR = {1994}, NUMBER = {1-3}, PAGES = {403--428}, ISSN = {0021-2172}, MRCLASS = {20E08 (57M07)}, MRNUMBER = {1286836}, MRREVIEWER = {Martin A. Roller}, DOI = {10.1007/BF02773004}, URL = {https://doi.org/10.1007/BF02773004}, }

@book{Pytheas,
 author 	= {Pytheas Fogg, N.},
 title		= {Substitutions in Dynamics, Arithmetics and Combinatorics},
 series		= {Lecture Notes in Mathematics},
 number	= {1794},
 publisher	= {Springer},
 location 	= {Berlin},
 year		= {2002},
 shorthand = {PF02},
}

@incollection {SchmelingTroubetzkoy00, AUTHOR = {Schmeling, J. and Troubetzkoy, S.}, TITLE = {Interval translation mappings}, BOOKTITLE = {Dynamical systems ({L}uminy-{M}arseille, 1998)}, PAGES = {291--302}, PUBLISHER = {World Sci. Publ., River Edge, NJ}, YEAR = {2000}, MRCLASS = {37E05 (37B05 54H20)}, MRNUMBER = {1796167}, MRREVIEWER = {Peter Raith}, }

@article {SkripchenkoTroubetzkoy15, AUTHOR = {Skripchenko, A. and Troubetzkoy, S.}, TITLE = {Polygonal billiards with one sided scattering}, JOURNAL = {Ann. Inst. Fourier (Grenoble)}, FJOURNAL = {Universit\'{e} de Grenoble. Annales de l'Institut Fourier}, VOLUME = {65}, YEAR = {2015}, NUMBER = {5}, PAGES = {1881--1896}, ISSN = {0373-0956}, MRCLASS = {37D50 (37B10 37B40 37C35)}, MRNUMBER = {3449199}, MRREVIEWER = {Andr\'{a}s Kr\'{a}mli}, URL = {http://aif.cedram.org/item?id=AIF_2015__65_5_1881_0}, }

@article {SuzukiItoAihara05, AUTHOR = {Suzuki, H. and Ito, S. and Aihara, K.}, TITLE = {Double rotations}, JOURNAL = {Discrete Contin. Dyn. Syst.}, FJOURNAL = {Discrete and Continuous Dynamical Systems. Series A}, VOLUME = {13}, YEAR = {2005}, NUMBER = {2}, PAGES = {515--532}, ISSN = {1078-0947}, MRCLASS = {37E05 (37B05 37E10 37E45)}, MRNUMBER = {2152403}, MRREVIEWER = {Henk Bruin}, DOI = {10.3934/dcds.2005.13.515}, URL = {https://doi.org/10.3934/dcds.2005.13.515}, }

@article {Troubetzkoy21, AUTHOR = {Troubetzkoy, S.}, TITLE = {Absence of mixing for interval translation mappings and some generalizations}, eprint = {2102.05904}, eprinttype = {arxiv}, eprintclass = {math.DS}, YEAR = {2021}, Url = {http://arxiv.org/abs/2102.05904}, File = {2102.05904.pdf}, note = {To appear in Trans. Moscow Math. Soc.}, shorthand = {T21},}

@article {Veech82, AUTHOR = {Veech, W. A.}, TITLE = {Gauss measures for transformations on the space of interval exchange maps}, JOURNAL = {Ann. of Math. (2)}, FJOURNAL = {Annals of Mathematics. Second Series}, VOLUME = {115}, YEAR = {1982}, NUMBER = {1}, PAGES = {201--242}, ISSN = {0003-486X}, MRCLASS = {28D05 (58F11)}, MRNUMBER = {644019}, MRREVIEWER = {Michael Keane}, DOI = {10.2307/1971391}, URL = {https://doi.org/10.2307/1971391},  shorthand = {V82},}

@article {Volk14, AUTHOR = {Volk, D.}, TITLE = {Almost every interval translation map of three intervals is finite type}, JOURNAL = {Discrete Contin. Dyn. Syst.}, FJOURNAL = {Discrete and Continuous Dynamical Systems. Series A}, VOLUME = {34}, YEAR = {2014}, NUMBER = {5}, PAGES = {2307--2314}, ISSN = {1078-0947}, MRCLASS = {37C70 (37C05 37C20 37D20 37D45)}, MRNUMBER = {3124735}, DOI = {10.3934/dcds.2014.34.2307}, URL = {https://doi.org/10.3934/dcds.2014.34.2307},  shorthand = {V14},}

@incollection {Yoccoz10, AUTHOR = {Yoccoz, J.-C.}, TITLE = {Interval exchange maps and translation surfaces}, BOOKTITLE = {Homogeneous flows, moduli spaces and arithmetic}, SERIES = {Clay Math. Proc.}, VOLUME = {10}, PAGES = {1--69}, PUBLISHER = {Amer. Math. Soc., Providence, RI}, YEAR = {2010}, MRCLASS = {37-02 (28D05 37A25 37C40 37D25 37D40 37D50)}, MRNUMBER = {2648692}, MRREVIEWER = {Thomas A. Schmidt},  shorthand = {Y10},}

\end{document}